\newtheorem{thm}{Theorem}[section]
\newtheorem*{thm-non}{Theorem}
\newtheorem{lem}[thm]{Lemma}
\newtheorem{prop}[thm]{Proposition}
\newtheorem{cor}[thm]{Corollary}
\theoremstyle{definition}
\newtheorem{defi}[thm]{Definition}
\newtheorem{rem}[thm]{Remark}
\DeclareMathOperator\Br{Br}
\DeclareMathOperator\End{End}
\DeclareMathOperator\Hom{Hom}
\DeclareMathOperator\Ext{Ext}
\DeclareMathOperator\Gal{Gal}
\DeclareMathOperator\ext{ext}
\DeclareMathOperator\id{id}
\DeclareMathOperator\M{M}
\DeclareMathOperator\Fix{Fix}
\DeclareMathOperator\Ima{Im}
\DeclareMathOperator\Pic{Pic}
\DeclareMathOperator\rk{rk}
\DeclareMathOperator\ord{ord}
\DeclareMathOperator\Aut{Aut}
\begin{document}

\title{Picard schemes of noncommutative bielliptic surfaces}

\author{Fabian Reede}
\address{Institut f\"ur Algebraische Geometrie, Leibniz Universit\"at Hannover, Welfengarten 1, 30167 Hannover, Germany}
\email{reede@math.uni-hannover.de}

\begin{abstract}
We study the nontrivial elements in the Brauer group of a bielliptic surface and show that they can be realized as Azumaya algebras with a simple structure at the generic point of the surface. We go on to study some properties of the noncommutative Picard scheme associated to such an Azumaya algebra.
\end{abstract}


\maketitle

\vspace{-7.ex}
\section*{Introduction}
According to Enriques' classification of smooth complex algebraic surfaces, the surfaces with Kodaira dimension zero can be divided into four classes: K3 surfaces, Enriques surfaces, abelian surfaces and bielliptic surfaces, see \cite{ven}. 

The study of moduli spaces of sheaves on surfaces with Kodaira dimension zero gave rise to a lot of interesting results, for example the construction of hyperk\"{a}hler varieties, that is irreducible holomorphic symplectic manifolds, of higher dimension. But it seems that the case of bielliptic surfaces was not studied extensively in this direction.

This situation changed recently. On the one hand, Nuer studied stable sheaves and especially possible Chern characters of stable sheaves on bielliptic surface in detail, see \cite{nuer}. On the other hand, building on Beauville's work in the case of Enriques surfaces in \cite{beau}, Bergstr{\"o}m, Ferrari, Tirabassi and Vodrup studied the so-called Brauer map for bielliptic surfaces in \cite{tirab}.

In this article we want to combine both directions by studying a certain version of noncommutative Picard schemes. Here we think of a noncommutative variety as a pair $(X,\mathcal{A})$ consisting of a classical complex algebraic variety $X$ and a sheaf of noncommutative $\mathcal{O}_X$-algebras $\mathcal{A}$ of finite rank as an $\mathcal{O}_X$-module. The algebras of interest in this article are Azumaya algebras. These are locally isomorphic to a matrix algebra $M_r(\mathcal{O}_X)$ with respect to the \'{e}tale topology and they are classified by the Brauer group $\Br(X)$ of $X$.

A noncommutative Picard scheme $\Pic(\mathcal{A})$ is the moduli scheme $\M_{\mathcal{A}/X}$ of certain sheaves on $X$, which have the structure of a left $\mathcal{A}$-module. These moduli schemes were constructed by Hoffmann and Stuhler in \cite{hoff}.

In this article we study the situation of noncommutative bielliptic surfaces. The main results of this article can be summarized as follows:

\begin{thm-non}
	Let $X$ be a bielliptic surface such that the Brauer group is nontrivial. Then every nontrivial element in $\Br(X)$ can be represented by an Azumaya algebra $\mathcal{A}$ that is a generically central simple cyclic division algebra. 
	
	Let $(X,\mathcal{A})$ be a noncommutative bielliptic surface defined by such an algebra. If the Brauer map of $X$ is injective then we have:
	\begin{enumerate}[i)]
		\item The noncommutative Picard scheme $\Pic(\mathcal{A})$ is smooth.
		\item Every torsion free $\mathcal{A}$-module of rank one can be deformed into a locally projective $\mathcal{A}$-module, that is the locus $\Pic(\mathcal{A})^{lp}$ of locally projective $\mathcal{A}$-modules is dense in $\Pic(\mathcal{A})$.
	\end{enumerate}
	Let $\overline{X}$ be the canonically covering abelian surface and denote the pullback of the Azumaya algebra to $\overline{X}$ by $\overline{\mathcal{A}}$, then $\Pic(\overline{\mathcal{A}})$ has a symplectic structure. For fixed Chern classes $c_1$ and $c_2$ we have
	\begin{enumerate}[i)]
		\setcounter{enumi}{2}
		\item $\Pic(\mathcal{A})_{c_1,c_2}$ is a finite \'{e}tale cover of a smooth projective subscheme $Y$ in $\Pic(\overline{\mathcal{A}})_{\overline{c_1},\overline{c_2}}$.
		\item The subscheme $Y$ is Lagrangian if and only if the canonical cover of $X$ has degree two or $\dim(\Pic(\mathcal{A})_{c_1,c_2})=1$.
	\end{enumerate} 
\end{thm-non}

Most results in this article are direct counterparts or have immediate generalizations from the case of noncommutative Enriques surfaces studied in \cite{fr}. But some results are new due to new phenomena on bielliptic surfaces, for example Brauer classes of order three. In this article we work over the field of complex numbers $\mathbb{C}$.

\section{Modules over an Azumaya algebra and cyclic Galois coverings}\label{1}
In this section we generalize the results of \cite[Section 1]{fr} from \'{e}tale double covers to arbitrary cyclic \'{e}tale Galois covers. So denote by $W$ a smooth projective variety of dimension $d$ together with a nontrivial $n$-torsion line bundle $L$, that is $n$ is the order of $L$ in $\Pic(W)$. By \cite[I.17]{hulek} there is a cyclic \'{e}tale Galois cover
\begin{equation}\label{push}
	q: \overline{W} \rightarrow W\,\,\,\text{such that}\,\,\,q_{*}\mathcal{O}_{\overline{W}}\cong\bigoplus\limits_{i=0}^{n-1} L^i.
\end{equation}
For every coherent sheaf $E$ on $W$ we denote by $\overline{E}$ the pullback of $E$ to $\overline{W}$ along $q$. 

\begin{defi}
	A sheaf of $\mathcal{O}_W$-algebras $\mathcal{A}$ is called an Azumaya algebra if
	\begin{itemize}
		\item $\mathcal{A}$ is locally free of finite rank and
		\item for every point $w\in W$ the fiber $\mathcal{A}(w)$ is a central simple algebra over the residue field $\mathbb{C}(w)$.
	\end{itemize}
	Furthermore a coherent $\mathcal{O}_W$-module $E$ is said to be an Azumaya module or an $\mathcal{A}$-module if $E$ has the structure of a left $\mathcal{A}$-module. 
\end{defi}

Azumaya algebras on $W$ are classified up to similarity by the Brauer group $\Br(W)$. Here similarity for two Azumaya algebras $\mathcal{A}$ and $\mathcal{B}$ is defined as follows:
\begin{equation*}
	\mathcal{A}\sim \mathcal{B}\,\,\,\text{if}\,\,\, \mathcal{A}\otimes\mathcal{E}nd_W(\mathcal{E})\cong\mathcal{B}\otimes\mathcal{E}nd_W(\mathcal{F}),
\end{equation*}
where $\mathcal{E}$ and $\mathcal{F}$ are locally free $\mathcal{O}_W$-modules of finite rank. 

We say $\mathcal{A}$ is trivial if $\left[\mathcal{A} \right]=\left[\mathcal{O}_W\right]$ in $\Br(W)$. A quick computation shows that $\mathcal{A}$ is trivial if and only if $\mathcal{A}\cong \mathcal{E}nd_W(P)$ for some locally free sheaf $P$ of finite rank. From now on, if not otherwise stated, an Azumaya algebra $\mathcal{A}$ is always a nontrivial Azumaya algebra such that $\overline{\mathcal{A}}$ is also nontrivial. The rank of an Azumaya algebra $\mathcal{A}$ is always a square so it makes sense to define the degree of such an algebra by:
\begin{equation*}
	\deg(\mathcal{A}):=\sqrt{\rk(\mathcal{A})}.
\end{equation*} 

Using \eqref{push}, the proof of \cite[Lemma 1.4]{fr} immediately gives: 

\begin{lem}\label{hom}
	Assume $E$ and $F$ are $\mathcal{A}$-modules, then 
	\begin{equation*}
		\Hom_{\overline{\mathcal{A}}}(\overline{E},\overline{F})\cong \bigoplus\limits_{i=0}^{n-1}\Hom_{\mathcal{A}}(E,F\otimes L^i).
	\end{equation*}
\end{lem}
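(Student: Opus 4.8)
The plan is to realise the right-hand side as a single $\Hom$ over $\mathcal{A}$ by means of the projection formula \eqref{push}, so that the whole statement reduces to the $(q^{*},q_{*})$-adjunction carried out $\mathcal{A}$-linearly. First I would record the adjunction in the module-over-algebra setting: for an $\mathcal{A}$-module $E$ and an $\overline{\mathcal{A}}$-module $G$ there is a natural isomorphism
\begin{equation*}
	\Hom_{\overline{\mathcal{A}}}(q^{*}E,G)\cong\Hom_{\mathcal{A}}(E,q_{*}G),
\end{equation*}
where $q_{*}G$ is an $\mathcal{A}$-module through the canonical map $\mathcal{A}\to q_{*}q^{*}\mathcal{A}=q_{*}\overline{\mathcal{A}}$. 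This is the ordinary sheaf adjunction $\Hom_{\mathcal{O}_{\overline{W}}}(q^{*}E,G)\cong\Hom_{\mathcal{O}_{W}}(E,q_{*}G)$, together with the observation that the $\overline{\mathcal{A}}$-action on $q^{*}E=\overline{E}$ is the pullback of the $\mathcal{A}$-action, so that a homomorphism is $\overline{\mathcal{A}}$-linear on the left exactly when its adjoint is $\mathcal{A}$-linear on the right.

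Applying this with $G=\overline{F}=q^{*}F$ gives $\Hom_{\overline{\mathcal{A}}}(\overline{E},\overline{F})\cong\Hom_{\mathcal{A}}(E,q_{*}q^{*}F)$, and it remains to decompose $q_{*}q^{*}F$ as an $\mathcal{A}$-module. Here \eqref{push} enters through the projection formula:
\begin{equation*}
	q_{*}q^{*}F\cong F\otimes q_{*}\mathcal{O}_{\overline{W}}\cong\bigoplus\limits_{i=0}^{n-1}F\otimes L^{i},
\end{equation*}
and since $\mathcal{A}$ acts only on the factor $F$ while $L^{i}$ is an invertible $\mathcal{O}_{W}$-module, this is an isomorphism of $\mathcal{A}$-modules, with $F\otimes L^{i}$ carrying the $\mathcal{A}$-structure induced from $F$. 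As $\Hom_{\mathcal{A}}(E,-)$ is additive, combining the two displays yields
\begin{equation*}
	\Hom_{\overline{\mathcal{A}}}(\overline{E},\overline{F})\cong\bigoplus\limits_{i=0}^{n-1}\Hom_{\mathcal{A}}(E,F\otimes L^{i}),
\end{equation*}
as desired.

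Every individual step is standard, so the substance of the proof is purely the bookkeeping of algebra structures: one must check that the adjunction isomorphism matches $\overline{\mathcal{A}}$-linearity with $\mathcal{A}$-linearity, and that the projection-formula isomorphism is $\mathcal{A}$-linear with $\mathcal{A}$ acting through $F$ alone. This is exactly the verification carried out in \cite[Lemma 1.4]{fr} for the \'{e}tale double cover, where $q_{*}\mathcal{O}_{\overline{W}}\cong\mathcal{O}_{W}\oplus L$; replacing this by the decomposition \eqref{push} reproduces the general cyclic case without change, and tracking these compatibilities is the only---and rather mild---obstacle.
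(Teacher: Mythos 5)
Your proof is correct and follows exactly the route the paper intends: the paper's proof of this lemma is simply the citation of \cite[Lemma 1.4]{fr} combined with the decomposition \eqref{push}, and that argument is precisely the $(q^{*},q_{*})$-adjunction plus the projection formula, with the bookkeeping of $\mathcal{A}$-module structures that you carry out. Nothing is missing; your write-up just makes explicit what the paper leaves to the reference.
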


Applying Lemma \ref{hom} to the case $F=E$ we find:

\begin{cor}\label{homvan}
	Assume $E$ is an $\mathcal{A}$-module. $\overline{E}$ is a simple $\overline{\mathcal{A}}$-module if and only if $E$ is a simple $\mathcal{A}$-module and $\Hom_{\mathcal{A}}(E,E\otimes L^i)=0$ for $1\leqslant i\leqslant n-1$.
\end{cor}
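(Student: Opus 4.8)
The plan is to feed $F=E$ directly into Lemma \ref{hom} and read off the statement from the resulting decomposition of the endomorphism algebra. Recall that in the present context a coherent $\mathcal{A}$-module (resp.\ $\overline{\mathcal{A}}$-module) is called simple precisely when its ring of $\mathcal{A}$-linear (resp.\ $\overline{\mathcal{A}}$-linear) endomorphisms equals $\mathbb{C}$. Setting $F=E$ in Lemma \ref{hom} and isolating the index $i=0$, for which $L^0=\mathcal{O}_W$, yields an isomorphism of $\mathbb{C}$-vector spaces
\begin{equation*}
	\End_{\overline{\mathcal{A}}}(\overline{E}) \cong \End_{\mathcal{A}}(E) \oplus \bigoplus_{i=1}^{n-1} \Hom_{\mathcal{A}}(E, E\otimes L^i).
\end{equation*}
The whole corollary will then follow from comparing dimensions on the two sides of this identity.

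First I would record the elementary observation that $\End_{\mathcal{A}}(E)$ always contains the scalar endomorphisms $\mathbb{C}\cdot\id_E$, so that $\dim_{\mathbb{C}}\End_{\mathcal{A}}(E)\geqslant 1$, with equality exactly when $E$ is simple. For the forward implication I would assume $\overline{E}$ is simple, so that the left-hand side is one-dimensional. Since the summand $\End_{\mathcal{A}}(E)$ on the right already contributes at least one dimension, it must account for the entire space: this forces $\dim_{\mathbb{C}}\End_{\mathcal{A}}(E)=1$, i.e.\ $E$ is simple, and simultaneously $\Hom_{\mathcal{A}}(E,E\otimes L^i)=0$ for every $1\leqslant i\leqslant n-1$.

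For the converse I would simply substitute the hypotheses into the decomposition: if $E$ is simple then $\End_{\mathcal{A}}(E)=\mathbb{C}$, and if in addition $\Hom_{\mathcal{A}}(E,E\otimes L^i)=0$ for $1\leqslant i\leqslant n-1$, then the direct sum collapses to $\mathbb{C}$, whence $\End_{\overline{\mathcal{A}}}(\overline{E})=\mathbb{C}$ and $\overline{E}$ is simple.

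There is essentially no serious obstacle here; the argument is a short dimension count once Lemma \ref{hom} is in hand. The only points requiring minor care are to confirm that the isomorphism of Lemma \ref{hom} is an isomorphism of $\mathbb{C}$-vector spaces, so that dimensions may legitimately be compared, and to pin down the convention that \emph{simple} means $\End=\mathbb{C}$ rather than the ring-theoretic notion of having no proper submodules. With that convention fixed, the statement is immediate.
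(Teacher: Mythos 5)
Your proof is correct and takes exactly the paper's route: the paper obtains the corollary simply by applying Lemma \ref{hom} with $F=E$, and your dimension count on the resulting decomposition $\End_{\overline{\mathcal{A}}}(\overline{E}) \cong \End_{\mathcal{A}}(E) \oplus \bigoplus_{i=1}^{n-1}\Hom_{\mathcal{A}}(E,E\otimes L^i)$ is just that application written out in full. The two points of care you flag ($\mathbb{C}$-linearity of the isomorphism and the convention that simple means $\End=\mathbb{C}$) are indeed the right ones, and both hold in this setting.
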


We also have the following variant of Serre duality, see \cite[Proposition 3.5.]{hoff}:
\begin{prop}\label{serre}
	Assume $E$ and $F$ are $\mathcal{A}$-modules, then for $i\geqslant 0$ there are isomorphisms 
	\begin{equation*}
		\Ext^i_{\mathcal{A}}(E,F)\cong \left( \Ext^{d-i}_{\mathcal{A}}(F,E\otimes\omega_{W})\right)^{\vee}.
	\end{equation*}
\end{prop}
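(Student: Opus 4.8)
The plan is to reduce the asserted $\mathcal{A}$-linear duality to ordinary Grothendieck--Serre duality on the smooth projective variety $W$, exploiting that an Azumaya algebra is a \emph{symmetric} $\mathcal{O}_W$-algebra. I would work in the bounded derived category and compute the dual of $R\Hom_{\mathcal{A}}(E,F)=R\Gamma(W,R\mathcal{H}om_{\mathcal{A}}(E,F))$, where $R\mathcal{H}om_{\mathcal{A}}(E,F)$ denotes the derived sheaf-$\Hom$ in the category of left $\mathcal{A}$-modules; the statement for the individual $\Ext$-groups then drops out by taking cohomology.

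First I would record the finiteness input. Since $W$ is smooth, hence regular, and $\mathcal{A}$ is locally free of finite rank over $\mathcal{O}_W$, the sheaf of algebras $\mathcal{A}$ is a regular order: every coherent $\mathcal{A}$-module admits, locally, a finite projective resolution of length at most $d$. Consequently $R\mathcal{H}om_{\mathcal{A}}(E,F)$ is a bounded complex of coherent $\mathcal{O}_W$-modules whose cohomology sheaves $\mathcal{E}xt^q_{\mathcal{A}}(E,F)$ vanish outside $0\leqslant q\leqslant d$. This is exactly what is needed to apply duality on $W$, and it also guarantees $\Ext^i_{\mathcal{A}}(E,F)=0$ for $i<0$ and $i>d$.

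The heart of the argument is the identity
\begin{equation*}
R\mathcal{H}om_{\mathcal{O}_W}\!\bigl(R\mathcal{H}om_{\mathcal{A}}(E,F),\,\omega_W\bigr)\;\cong\;R\mathcal{H}om_{\mathcal{A}}\!\bigl(F,\,E\otimes_{\mathcal{O}_W}\omega_W\bigr).
\end{equation*}
To prove it I would use the reduced trace pairing $(a,b)\mapsto \tr(ab)$, which is nondegenerate and furnishes an isomorphism $\mathcal{H}om_{\mathcal{O}_W}(\mathcal{A},\mathcal{O}_W)\cong\mathcal{A}$ of $\mathcal{A}$-bimodules; thus $\mathcal{A}$ is symmetric over $\mathcal{O}_W$. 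Combined with tensor--hom adjunction between the forgetful functor and $\mathcal{A}\otimes_{\mathcal{O}_W}-$, this moves the $\mathcal{O}_W$-dual across the $\mathcal{A}$-linear $\mathcal{H}om$ at the cost of the twist by $\omega_W$. I expect this bimodule bookkeeping to be the main obstacle: one must check that the various left/right actions and the twist match up correctly. The cleanest verification is \'etale-local, where $\mathcal{A}$ splits as a matrix algebra and Morita equivalence turns both sides into the ordinary reflexivity isomorphism $\mathcal{H}om_{\mathcal{O}_W}(\mathcal{H}om_{\mathcal{O}_W}(E_0,F_0),\omega_W)\cong\mathcal{H}om_{\mathcal{O}_W}(F_0,E_0\otimes\omega_W)$ for the Morita-transformed sheaves; the derived and global statement then follows by passing to locally free resolutions and gluing.

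Finally I would combine these. Applying $R\Gamma(W,-)$ to the displayed identity and invoking Grothendieck--Serre duality $R\Gamma(W,G)^{\vee}\cong R\Gamma\bigl(W,R\mathcal{H}om_{\mathcal{O}_W}(G,\omega_W[d])\bigr)$ with $G=R\mathcal{H}om_{\mathcal{A}}(E,F)$ gives
\begin{equation*}
R\Hom_{\mathcal{A}}(E,F)^{\vee}\;\cong\;R\Hom_{\mathcal{A}}\bigl(F,\,E\otimes_{\mathcal{O}_W}\omega_W\bigr)[d].
\end{equation*}
Taking $(-i)$-th cohomology and using that all the spaces involved are finite-dimensional $\mathbb{C}$-vector spaces yields $\Ext^i_{\mathcal{A}}(E,F)\cong\bigl(\Ext^{d-i}_{\mathcal{A}}(F,E\otimes\omega_W)\bigr)^{\vee}$, which is the assertion. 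The only points requiring care beyond the bimodule identity are the boundedness and coherence already secured in the second step, which justify every application of duality.
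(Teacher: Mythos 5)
Your proposal is correct and is in substance the proof behind this statement: the paper does not prove the proposition itself but cites Hoffmann--Stuhler \cite[Proposition 3.5.]{hoff}, whose argument rests on the same two pillars you use, namely that the reduced trace pairing makes the Azumaya algebra $\mathcal{A}$ a symmetric $\mathcal{O}_W$-algebra (trivializing the dualizing bimodule $\mathcal{H}om_{\mathcal{O}_W}(\mathcal{A},\omega_W)\cong\mathcal{A}\otimes\omega_W$, with nondegeneracy checked \'etale-locally via Morita equivalence), followed by a reduction to classical Grothendieck--Serre duality on the smooth projective variety $W$. The only delicate point is the left/right bimodule bookkeeping in the identity $R\mathcal{H}om_{\mathcal{O}_W}\bigl(R\mathcal{H}om_{\mathcal{A}}(E,F),\omega_W\bigr)\cong R\mathcal{H}om_{\mathcal{A}}(F,E\otimes\omega_W)$, which you correctly identify as the crux and handle by the same trace-plus-local-splitting mechanism.
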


In the case of surfaces, that is $\dim(W)=2$, we find similar to \cite[Lemma 1.7]{fr}:

\begin{lem}\label{double2}
	Assume $E$ is an $\mathcal{A}$-module which is torsion free as an $\mathcal{O}_W$-module. If $\overline{E^{**}}$ is a simple $\overline{\mathcal{A}}$-module, then $\Hom_{\mathcal{A}}(E,E^{**})\cong\mathbb{C}$ and for $1\leqslant i \leqslant n-1$:
	\begin{equation*}
		\Hom_{\mathcal{A}}(E,E^{**}\otimes L^i)=0.
	\end{equation*}
\end{lem}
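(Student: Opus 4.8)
The plan is to reduce the computation of $\Hom_{\mathcal{A}}(E,E^{**}\otimes L^i)$ to the analogous groups for $E^{**}$ in place of $E$, and then to read off the latter directly from the simplicity hypothesis via Corollary \ref{homvan}. Throughout fix $i$ with $0\leqslant i\leqslant n-1$ and write $G:=E^{**}\otimes L^i$. First I would record two standard facts on the smooth surface $W$: the reflexive hull $E^{**}$ is locally free, so $G$ is locally free; and the canonical inclusion $E\hookrightarrow E^{**}$ sits in a short exact sequence $0\to E\to E^{**}\to Q\to 0$ in which $Q$ has zero-dimensional support, i.e. $\codim(\supp Q)=2$.

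The key step is to show that restriction along $E\hookrightarrow E^{**}$ induces an isomorphism $\Hom_{\mathcal{A}}(E^{**},G)\xrightarrow{\sim}\Hom_{\mathcal{A}}(E,G)$. Applying $\mathcal{H}om_{\mathcal{O}_W}(-,G)$ to the sequence above, the vanishing $\mathcal{E}xt^p_{\mathcal{O}_W}(Q,G)=0$ for $p<\codim(\supp Q)=2$ (valid since $G$ is locally free) gives $\mathcal{H}om_{\mathcal{O}_W}(Q,G)=0$ and $\mathcal{E}xt^1_{\mathcal{O}_W}(Q,G)=0$. Hence the restriction map $\mathcal{H}om_{\mathcal{O}_W}(E^{**},G)\to\mathcal{H}om_{\mathcal{O}_W}(E,G)$ is an isomorphism of sheaves, so every $\mathcal{O}_W$-linear map $E\to G$ extends uniquely to $E^{**}$.

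The part that needs care is upgrading this to $\mathcal{A}$-linear maps. Given $\phi\in\Hom_{\mathcal{A}}(E,G)$ with unique $\mathcal{O}_W$-linear extension $\widetilde{\phi}\colon E^{**}\to G$, I would check $\widetilde{\phi}$ is $\mathcal{A}$-linear by noting that, for any local section $a$ of $\mathcal{A}$, the two $\mathcal{O}_W$-linear maps $\widetilde{\phi}\circ(a\cdot-)$ and $(a\cdot-)\circ\widetilde{\phi}$ from $E^{**}$ to $G$ agree on $E$ and hence, by the uniqueness of the extension just established (applied locally), agree on all of $E^{**}$. This yields $\Hom_{\mathcal{A}}(E,E^{**}\otimes L^i)\cong\Hom_{\mathcal{A}}(E^{**},E^{**}\otimes L^i)$ for every $i$.

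Finally I would invoke the hypothesis. Since $E^{**}$ is an $\mathcal{A}$-module and $\overline{E^{**}}$ is a simple $\overline{\mathcal{A}}$-module, Corollary \ref{homvan} applied to $E^{**}$ gives $\Hom_{\mathcal{A}}(E^{**},E^{**})\cong\mathbb{C}$ and $\Hom_{\mathcal{A}}(E^{**},E^{**}\otimes L^i)=0$ for $1\leqslant i\leqslant n-1$. Combining with the isomorphisms of the previous step proves $\Hom_{\mathcal{A}}(E,E^{**})\cong\mathbb{C}$ and $\Hom_{\mathcal{A}}(E,E^{**}\otimes L^i)=0$ for $1\leqslant i\leqslant n-1$. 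I expect the main obstacle to be the middle two steps: pinning down the $\mathcal{E}xt$-sheaf vanishing and, above all, verifying that the unique $\mathcal{O}_W$-linear extension automatically respects the $\mathcal{A}$-action; once these are in place the conclusion is immediate from Corollary \ref{homvan}.
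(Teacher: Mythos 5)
Your proof is correct and is essentially the argument the paper has in mind: the paper defers to \cite[Lemma 1.7]{fr}, whose proof is exactly this reduction --- extend $\mathcal{A}$-homomorphisms from $E$ across the codimension-two support of $E^{**}/E$ to homomorphisms from the locally free hull $E^{**}$, then apply Corollary \ref{homvan} to $E^{**}$. Your careful treatment of the $\mathcal{A}$-linearity of the unique $\mathcal{O}_W$-linear extension (via uniqueness applied locally) is a valid way to handle the one point that requires care.
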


Recall that the relative automorphism group of the \'{e}tale cyclic Galois cover $q: \overline{W} \rightarrow W$ is generated by a covering map $\iota$ of order $n$:
\begin{equation*}
	\Aut(\overline{W}/W)=\left\langle \iota \right\rangle \cong\mathbb{Z}/n\mathbb{Z}.
\end{equation*}

As the group $\Aut(\overline{W}/W)$ is cyclic, the descent condition for a coherent sheaf $F$ on $\overline{W}$, see   \cite[\href{https://stacks.math.columbia.edu/tag/0D1V}{Lemma 0D1V}]{stacks-project}, reduces to the existence of an isomorphism $\varphi_{\iota}: F\rightarrow \iota^{*}F$ such that the map:
\begin{equation*}\label{desccond}
	\psi:=\left( \iota^{n-1}\right)^{*}\varphi_{\iota}\circ\ldots\circ \iota^{*}\varphi_{\iota}\circ\varphi_{\iota}: F \rightarrow \left(\iota^n \right)^{*}F \cong F
\end{equation*}
is the identity map.
If $F$ is simple, then any $\varphi_{\iota}$ satisfies $\psi\in \End_{\overline{W}}(F)=\mathbb{C}\cdot\id_F$. Hence after multiplication with an appropriate scalar $\varphi_{\iota}$ satisfies the descent condition and $F$ descends, that is $F\cong \overline{E}$ for some coherent $\mathcal{O}_W$-module $E$. This standard result can be generalized to the noncommutative situation:

\begin{thm}\label{desc}
	Assume $F$ is a simple $\overline{\mathcal{A}}$-module with an isomorphism $F\cong \iota^{*}F$ of $\overline{\mathcal{A}}$-modules, then there is an $\mathcal{A}$-module $E$ and an isomorphism of $\overline{\mathcal{A}}$-modules $F\cong\overline{E}$. 
\end{thm}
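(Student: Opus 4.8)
The plan is to carry out the scalar-adjustment argument sketched just above the statement, but inside the category of $\overline{\mathcal{A}}$-modules rather than that of $\mathcal{O}_{\overline{W}}$-modules, and then to invoke faithfully flat (\'{e}tale Galois) descent. The starting observation is that $\overline{\mathcal{A}}=q^{*}\mathcal{A}$ carries a canonical equivariant structure: since $q\circ\iota=q$, there are canonical algebra isomorphisms $\iota^{*}\overline{\mathcal{A}}\cong\overline{\mathcal{A}}$ satisfying the cocycle condition, coming from the fact that $\mathcal{A}$ is already defined on $W$. Using these identifications, $\iota^{*}F$ is again an $\overline{\mathcal{A}}$-module, so the hypothesis provides an isomorphism $\varphi_{\iota}\colon F\to\iota^{*}F$ of $\overline{\mathcal{A}}$-modules.

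First I would form the composite $\psi=\left(\iota^{n-1}\right)^{*}\varphi_{\iota}\circ\dots\circ\iota^{*}\varphi_{\iota}\circ\varphi_{\iota}\colon F\to\left(\iota^{n}\right)^{*}F\cong F$ exactly as in the commutative case. Because each factor $\left(\iota^{j}\right)^{*}\varphi_{\iota}$ is $\overline{\mathcal{A}}$-linear and the canonical identifications of $\overline{\mathcal{A}}$ obey the cocycle condition, $\psi$ is an endomorphism in $\End_{\overline{\mathcal{A}}}(F)$, not merely in $\End_{\overline{W}}(F)$. Since $F$ is a simple $\overline{\mathcal{A}}$-module and $\overline{W}$ is projective over $\mathbb{C}$, the ring $\End_{\overline{\mathcal{A}}}(F)$ is a finite-dimensional division algebra over $\mathbb{C}$, hence $\End_{\overline{\mathcal{A}}}(F)=\mathbb{C}$ by Schur's lemma; thus $\psi=\lambda\cdot\id_{F}$ for some $\lambda\in\mathbb{C}^{*}$. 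Replacing $\varphi_{\iota}$ by $\mu\varphi_{\iota}$ with $\mu^{n}=\lambda^{-1}$ rescales $\psi$ to $\mu^{n}\lambda\cdot\id_{F}=\id_{F}$; as $\mu$ is a constant scalar the rescaled $\varphi_{\iota}$ is still $\overline{\mathcal{A}}$-linear, so $(F,\varphi_{\iota})$ is now a genuine descent datum in the category of $\overline{\mathcal{A}}$-modules.

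With the cocycle condition arranged, I would apply faithfully flat descent along the \'{e}tale Galois cover $q$. The datum $(F,\varphi_{\iota})$ descends $F$ to an $\mathcal{O}_{W}$-module $E$ with $\overline{E}\cong F$. The point specific to the noncommutative setting is to upgrade this to an $\mathcal{A}$-module structure: because $\varphi_{\iota}$ is $\overline{\mathcal{A}}$-linear and compatible with the canonical descent datum on $\overline{\mathcal{A}}$, the action morphism $\overline{\mathcal{A}}\otimes_{\mathcal{O}_{\overline{W}}}F\to F$ is a morphism of descent data and therefore descends to a morphism $\mathcal{A}\otimes_{\mathcal{O}_{W}}E\to E$. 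The associativity and unit axioms for this action can be checked after applying the faithfully flat pullback $q^{*}$, where they hold since $\overline{E}\cong F$ is an $\overline{\mathcal{A}}$-module; hence $E$ is an $\mathcal{A}$-module and $F\cong\overline{E}$ as $\overline{\mathcal{A}}$-modules, as required.

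The routine part is the scalar adjustment via Schur's lemma, which is formally identical to the commutative case. I expect the main obstacle to be the bookkeeping that makes the descended object an $\mathcal{A}$-module rather than merely an $\mathcal{O}_{W}$-module: one must consistently carry the module's descent datum together with the canonical descent datum of $\overline{\mathcal{A}}$, that is, perform descent inside the category of $\overline{\mathcal{A}}$-modules equipped with the equivariant structure of $\overline{\mathcal{A}}$. Equivalently, the cleanest formulation is that descent gives an equivalence between $\mathcal{A}$-modules and $\overline{\mathcal{A}}$-modules furnished with a cocycle-compatible isomorphism $F\cong\iota^{*}F$, and the content of the theorem is precisely that simplicity allows one to normalize an arbitrary such isomorphism into one satisfying the cocycle condition.
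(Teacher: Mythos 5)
Your proof is correct, but it takes a genuinely different route from the one in the paper. The paper (following the proof of \cite[Theorem 2.6]{fr}) does not descend the $\overline{\mathcal{A}}$-module structure directly: it passes to the Brauer--Severi varieties $p\colon Y\to W$ and $\overline{p}\colon \overline{Y}\to\overline{W}$ of $\mathcal{A}$ and $\overline{\mathcal{A}}$, observes that the induced morphism $\overline{q}\colon\overline{Y}\to Y$ is again a cyclic \'{e}tale Galois cover (induced by the $n$-torsion line bundle $p^{*}L$, using the injectivity of $p^{*}\colon\Pic(W)\to\Pic(Y)$ from \cite{reede}), and then reduces everything to classical descent of $\mathcal{O}_{\overline{Y}}$-modules, the module structure being carried along by the correspondence between $\overline{\mathcal{A}}$-modules on $\overline{W}$ and suitable sheaves on $\overline{Y}$. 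You instead stay on $\overline{W}$ and perform descent inside the category of $\overline{\mathcal{A}}$-modules, using the canonical cocycle-compatible identifications $\iota^{*}\overline{\mathcal{A}}\cong\overline{\mathcal{A}}$ and the fact that the faithfully flat descent equivalence respects tensor products, so that the action map $\overline{\mathcal{A}}\otimes_{\mathcal{O}_{\overline{W}}}F\to F$ is a morphism of descent data and its associativity and unit axioms can be checked after the faithful pullback $q^{*}$. The crucial step---rescaling $\varphi_{\iota}$ by an $n$-th root of the scalar $\lambda$ with $\psi=\lambda\cdot\id_{F}$, which is where simplicity enters---is the same in both arguments; the difference is only where that scalar lives: in $\End_{\overline{\mathcal{A}}}(F)$ for you, in the endomorphism ring of the corresponding sheaf on $\overline{Y}$ for the paper. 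What your route buys: it is self-contained, needs no Brauer--Severi machinery, and works verbatim for any sheaf of $\mathcal{O}_{W}$-algebras (not just Azumaya algebras) along any cyclic \'{e}tale Galois cover. What the paper's route buys: it invokes only the classical descent statement for coherent sheaves as cited from \cite{stacks-project}, and it reuses geometric input (\cite{reede}, \cite{fr}) already needed elsewhere in the paper, at the price of the detour through Brauer--Severi varieties. If you write your argument up carefully, the one point to make explicit is the monoidal compatibility of the descent equivalence (descent commutes with tensor products and pullbacks), since that is exactly what substitutes for the paper's geometric reduction.
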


The proof of this theorem is the same as the proof of \cite[Theorem 2.6]{fr}. One uses the Brauer-Severi varieties $p: Y \rightarrow W$ and $\overline{p}:\overline{Y}\rightarrow \overline{W}$ associated to $\mathcal{A}$ and $\overline{\mathcal{A}}$. By functoriality of the Brauer-Severi variety and/or the functoriality of the \'{e}tale cyclic Galois cover (as a relative spectrum) we get a morphism $\overline{q}: \overline{Y}\rightarrow Y$.

The idea is to reduce the question about descent of $\overline{\mathcal{A}}$-modules on $\overline{W}$ to a descent argument for classical $\mathcal{O}_{\overline{Y}}$-modules on $\overline{Y}$. This works out well, as the morphism $\overline{q}: \overline{Y}\rightarrow Y$ is also a cyclic \'{e}tale Galois cover. It is induced by the n-torsion line bundle $p^{*}L$. The last fact follows from the injectivity of $p^{*}: \Pic(W) \rightarrow \Pic(Y)$, which in turn follows from the projection formula and $p_{*}\mathcal{O}_Y \cong \mathcal{O}_W$, see \cite[Lemma 1.6]{reede}.

\section{Noncommutative bielliptic surfaces}\label{3}
\begin{defi}
	A smooth projective minimal surface $X$ is called a bielliptic surface if:
	\begin{itemize}
		\item $\kappa(X)=0$ that is $X$ has Kodaira dimension zero,
		\item $q(X)=1$ that is $\mathrm{H}^1(X,\mathcal{O}_X)\cong\mathbb{C}$ and
		\item $p_g(X)=0$ that is $\mathrm{H}^2(X,\mathcal{O}_X)=0$.
	\end{itemize}
\end{defi}
It is well known that each such surface is of the form
\begin{equation*}
	X=(E\times F)/G,
\end{equation*}
where $E$ and $F$ are elliptic curves and $G$ is a finite abelian group. $G$ acts via
\begin{equation*}
	g.(e,f)=(e+g,\Psi(g)(f)),
\end{equation*}
where we understand $G\subset E$ as a finite subgroup and $\Psi: G \rightarrow \Aut(F)$ is an injective group homomorphism. That is $E/G$ is again an elliptic curve and $F/G\cong\mathbb{P}^1$. Furthermore $\omega_X\in \Pic(X)$ is a torsion element of order $n$ with $n\in \left\lbrace 2,3,4,6 \right\rbrace$.

Using this structure theorem Bagnera and de Franchis were able to classify all bielliptic surfaces. In fact, each such surface belongs to one of seven families, which can be found in the following table, see for example \cite[V.5]{hulek} or \cite[List VI.20]{beau4}:

\begin{center}
	\begin{tabular}{cccc}
		\hline \noalign{\vskip 2mm} 
		Type & G & order of $\omega_X$ & $\Br(X)$ \\ [0.5ex] 
		\hline\noalign{\vskip 2mm} 
		1 & $\mathbb{Z}/2\mathbb{Z}$ & 2 & $\mathbb{Z}/2\mathbb{Z}\times \mathbb{Z}/2\mathbb{Z}$ \\ \noalign{\vskip 1mm} 
		2 & $\mathbb{Z}/2\mathbb{Z}\times\mathbb{Z}/2\mathbb{Z}$ & 2 & $\mathbb{Z}/2\mathbb{Z}$ \\\noalign{\vskip 1mm} 
		3 & $\mathbb{Z}/4\mathbb{Z}$ & 4 & $\mathbb{Z}/2\mathbb{Z}$ \\\noalign{\vskip 1mm} 
		4 & $\mathbb{Z}/4\mathbb{Z}\times\mathbb{Z}/2\mathbb{Z}$ & 4 & 0 \\\noalign{\vskip 1mm} 
		5 & $\mathbb{Z}/3\mathbb{Z}$ & 3 & $\mathbb{Z}/3\mathbb{Z}$ \\\noalign{\vskip 1mm} 
		6 & $\mathbb{Z}/3\mathbb{Z}\times\mathbb{Z}/3\mathbb{Z}$ & 3 & 0 \\\noalign{\vskip 1mm} 
		7 & $\mathbb{Z}/6\mathbb{Z}$ & 6 & 0 \\ [1ex] 
		\hline
	\end{tabular}
\end{center}

\begin{rem}
	The Brauer group of a bielliptic surface can be found as follows: first note that according to \cite[Proposition 4]{beau3} there is an isomorphism
	\begin{equation*}
		\Br(X) \cong \mathrm{H}^3(X,\mathbb{Z})_{\mathrm{tor}}.
	\end{equation*}
	Poincar\'{e} duality identifies the last group with $\mathrm{H}_1(X,\mathbb{Z})_{\mathrm{tor}}$. But these groups were, for example, computed by Serrano in \cite[Page 531, Table 3]{ser}.
\end{rem}

The table shows that we only need to work with bielliptic surfaces of type $1,2,3$ and $5$ in the following, since we are interested in nontrivial Azumaya algebras.

Next we want to study the nontrivial elements in the Brauer group of a bielliptic surface. For this we start with a field $K$. A central simple $K$-algebra $A$ is called cyclic if $A$ contains a strictly maximal subfield $L\hookrightarrow A$, such that $L$ is a cyclic Galois extension of $K$. Here $L$ is called strictly maximal if $[L:K]=\deg(A)$. These algebras are special cases of so called crossed products and have the fairly simple description $A=L[t,\sigma]/(t^n-a)$, where $L[t,\sigma]$ is the skew-polynomial ring defined by a generator $\sigma\in \Gal(L/K)$, $a\in K^{\times}$ and $n=\deg(A)$, see for example \cite[Chapter 15]{pierce} or \cite[Part II: Sections 9 and 10]{drax}. 

If $K$ contains a primitive $n$-th root of unity $\zeta$ and $\mathrm{char}(K)\nmid n$, then cyclic algebras can also be described as $n$-symbol algebras, see \cite[Corollary 2.5.5]{gille}. Here for $a,b\in K^{\times}$ the  $n$-symbol algebra $\left\langle a,b \right\rangle_n$ is the $K$-algebra generated by two elements $u,v$ with the relations
\begin{equation*}
	u^n=a,\,\,\,v^n=b\,\,\,\text{and}\,\,\, uv=\zeta vu.
\end{equation*}

The algebra $\left\langle a,b \right\rangle_n$ is central simple and satisfies
\begin{equation*}
	\deg(\left\langle a,b \right\rangle_n)=n\,\,\,\,\text{and}\,\,\,\,\ord(\left\langle a,b \right\rangle_n)=n\,\,\text{in}\,\Br(K).
\end{equation*}

\begin{prop}\label{repr}
	The nontrivial elements in the Brauer group of a bielliptic surface $X$ can be represented by Azumaya algebras $\mathcal{A}$ on $X$ that are generically central simple cyclic division $\mathbb{C}(X)$-algebras such that
	\begin{equation*}
		\deg(\mathcal{A})=\begin{cases}
			2 & \text{if}\,\,\, \ord\left( \left[\mathcal{A} \right] \right)=2\\
			3 & \text{if}\,\,\,\ord\left( \left[\mathcal{A} \right] \right)=3. 
		\end{cases}
	\end{equation*}
	
\end{prop}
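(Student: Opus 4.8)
The plan is to pass to the generic point, determine the index of the Brauer class there, invoke a cyclicity result to obtain a cyclic division algebra of the correct degree, and finally spread it out to an Azumaya algebra on $X$ via maximal orders.

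First, the table together with the preceding remark shows that every nontrivial class in $\Br(X)$ has order $2$ or $3$, so it suffices to treat these two cases. Since $X$ is smooth, hence regular, the restriction map $\Br(X)\to\Br(\mathbb{C}(X))$ is injective and identifies $\Br(X)$ with the subgroup of classes that are unramified at every codimension-one point; this is Grothendieck's purity theorem for Brauer groups of regular surfaces. I would therefore fix a nontrivial class $\alpha\in\Br(X)$ of order $n\in\{2,3\}$ and regard it as an unramified class in $\Br(K)$, where $K:=\mathbb{C}(X)$.

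Next I would pin down the degree. The field $K$ is the function field of a surface over $\mathbb{C}$, so by de Jong's solution of the period--index problem for such fields the index of $\alpha$ equals its period $n$. Hence the division algebra $D$ representing $\alpha$ over $K$ satisfies $\deg(D)=n$. For cyclicity: when $n=2$ the algebra $D$ is a quaternion algebra, which is cyclic, and since $\mathbb{C}\subset K$ contains the requisite roots of unity it is a $2$-symbol algebra; when $n=3$ every central division algebra of degree three is cyclic by Wedderburn's theorem, and again it is a $3$-symbol algebra over $K$. Either way $D$ is a central simple cyclic division $K$-algebra of degree $n$, which is exactly the generic structure demanded in the statement.

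Finally I would spread $D$ out over $X$. Choosing a maximal $\mathcal{O}_X$-order $\mathcal{A}$ in $D$ yields a coherent, reflexive order with $\mathcal{A}\otimes_{\mathcal{O}_X}K\cong D$, so $\deg(\mathcal{A})=n$ and the generic fiber is the desired cyclic division algebra; it remains to verify that $\mathcal{A}$ is genuinely Azumaya. At each codimension-one point the local ring is a discrete valuation ring and $\alpha$ is unramified there, so $\mathcal{A}$ is Azumaya in codimension one, and being a reflexive order on the regular surface $X$ it is then Azumaya everywhere. I expect this last step to be the main obstacle: guaranteeing that the maximal order is Azumaya rather than merely an order with possible ramification at closed points. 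The decisive input is precisely that $\alpha$ lies in $\Br(X)$, i.e.\ is unramified in codimension one, so that the codimension-one Azumaya property together with reflexivity propagates across the (finitely many) closed points via the Auslander--Goldman theory of maximal orders over two-dimensional regular local rings; the numerical control of the degree, namely period $=$ index over surface function fields, is the other essential but citable ingredient.
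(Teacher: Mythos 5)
Your proposal is correct and follows essentially the same route as the paper: restrict to the generic point (injective by purity/regularity), use period $=$ index over the function field of a complex surface to pin down the degree of the division algebra, get cyclicity from the quaternion case respectively Wedderburn's theorem in degree three, and spread the unramified class out to an Azumaya algebra with the division algebra as generic fiber. The only differences are at the level of citations: the paper invokes the Artin--Tate result for the index claim rather than de Jong's period--index theorem, and it cites Colliot-Th\'{e}l\`{e}ne's Th\'{e}or\`{e}me 2.5 as a black box for the final spreading-out step, whose standard proof is precisely the maximal-order/Auslander--Goldman argument you sketch.
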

\begin{proof}
	Looking at the list of types of bielliptic surface, we see that a nontrivial element $b\in\text{Br}(X)$ has order two or three.
	As $X$ is smooth by \cite[Th\'{e}or\`{e}me 2.4.]{coll} the restriction to the generic point $\eta$ gives an injection
	\begin{equation*}
		r_\eta: \text{Br}(X)\hookrightarrow \text{Br}(\mathbb{C}(X)).
	\end{equation*}
	So the image $r_\eta(b)$ has order two respectively three in $\text{Br}(\mathbb{C}(X))$.
	
	Since every class in $\Br(\mathbb{C}(X))$ contains (up to isomorphism) a unique division algebra, see \cite[12.5. Proposition b]{pierce}, we may assume that $r_{\eta}(b)$ is represented by a central simple division algebra $A$. By a result of Artin and Tate, the division algebra $A$ has index two respectively three as $\mathbb{C}(X)$ has transcendence degree two over $\mathbb{C}$, see \cite[Appendix]{artin}. It remains to note that the index of $A$ is nothing but its degree, since $A$ is a division algebra.
	
	Due to K\"{o}the's theorem, see \cite[Page 64]{drax}, a division algebra over a field $K$ contains a maximal subfield $L$ such that $L/K$ is separable. But maximal subfields in a division algebra are strictly maximal by \cite[13.1 Corollary b]{pierce}. Thus if $\deg(A)=2$ then $A$ contains a strictly maximal subfield $L$ with $[L:\mathbb{C}(X)]=2$, hence $L/\mathbb{C}(X)$ is cyclic Galois and so $A$ is cyclic. The fact that a division algebra of degree three is cyclic is a classical result due to Wedderburn, see \cite[15.6.]{pierce}.
	
	As the class $[A]=r_{\eta}(b)$ comes from $\text{Br}(X)$ it is unramified at every point of codimension one in $X$, and thus by \cite[Th\'{e}or\`{e}me 2.5.]{coll} there is an Azumaya algebra $\mathcal{A}$ on $X$ with $\mathcal{A}\otimes\mathbb{C}(X)=A$ such that $\left[ \mathcal{A}\right]=b$.
\end{proof}

Since the canonical bundle $\omega_X\in\Pic(X)$ is $n$-torsion, it induces a cyclic \'{e}tale Galois cover $\pi: \overline{X}\rightarrow X$ of degree $n$, the so called canonical cover. This cover satisfies the property $\overline{\omega_X}\cong\omega_{\overline{X}}=\mathcal{O}_{\overline{X}}$. It is known that $\overline{X}$ is an abelian surface. More exactly, if $X=(E\times F)/G$ then we see by \cite[2.2.]{tirab}:
\begin{equation*}
	\overline{X}=\begin{cases} E\times F & \text{if $X$ is of type}\,\,\,1,3,5\\
		(E\times F)/H & \text{if $X$ is of type}\,\,\,2\,\,\text{with}\,\, H\cong \mathbb{Z}/2\mathbb{Z}.
	\end{cases}
\end{equation*}

The canonical cover induces a morphism $\pi^{*}: \Br(X)\rightarrow \Br(\overline{X})$, the so called Brauer map. A natural question is, if the Brauer map is injective. Bergstr{\"o}m, Ferrari, Tirabassi and Vodrup give a complete answer to this question in \cite{tirab}. It turns out that the answer is quite complicated and subtle in some cases and the results are not easily stated. Here we only record one example of these results, because it resembles most the case of Enriques surfaces found by Beauville in \cite{beau}, see \cite[Theorem 5.3.]{tirab}:

\begin{thm}
	Let $X$ be a bielliptic surface with $X=(E\times F)/G$. If the elliptic curves $E$ and $F$ are not isogenous, then the morphism $\pi^{*}: \Br(X) \rightarrow \Br(\overline{X})$ is injective.
\end{thm}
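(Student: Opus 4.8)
The plan is to identify the kernel of $\pi^{*}$ cohomologically and then to show it vanishes under the non-isogeny hypothesis. Write $G_0=\Gal(\overline{X}/X)\cong\mathbb{Z}/n\mathbb{Z}=\langle\iota\rangle$, cyclic of order $n\in\{2,3,4\}$ on the relevant types $1,2,3,5$. I would run the Hochschild--Serre spectral sequence $\mathrm{H}^{p}(G_0,\mathrm{H}^{q}(\overline{X},\mathbb{G}_m))\Rightarrow \mathrm{H}^{p+q}(X,\mathbb{G}_m)$ for the \'{e}tale Galois cover $\pi$. Since $\overline{X}$ is proper and connected over $\mathbb{C}$, the group $\mathrm{H}^{0}(\overline{X},\mathbb{G}_m)=\mathbb{C}^{\times}$ is divisible, so $\mathrm{H}^{2}(G_0,\mathbb{C}^{\times})=0$ and hence $E_\infty^{2,0}=0$. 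Consequently the edge homomorphism realizing $\pi^{*}\colon\Br(X)\to\Br(\overline{X})^{G_0}$ has kernel exactly the surviving term $E_\infty^{1,1}\subseteq E_2^{1,1}=\mathrm{H}^{1}(G_0,\Pic(\overline{X}))$, so proving injectivity amounts to showing $E_\infty^{1,1}=0$.

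Next I would split off the algebraic part using $0\to\Pic^{0}(\overline{X})\to\Pic(\overline{X})\to\mathrm{NS}(\overline{X})\to 0$ and the associated long exact sequence in $G_0$-cohomology. Here the hypothesis enters decisively: if $E$ and $F$ are not isogenous then $\Hom(E,F)=0$, so for $\overline{X}=E\times F$ (types $1,3,5$) one has $\mathrm{NS}(\overline{X})=\mathrm{NS}(E)\oplus\mathrm{NS}(F)\cong\mathbb{Z}^{2}$, with a similarly minimal description for the quotient in type $2$. Moreover $G_0$ acts trivially on $\mathrm{NS}(\overline{X})$, because the translation part of $\iota$ on $E$ acts trivially on cohomology while the automorphism $\Psi(g_0)$ of $F$ fixes the class of a point. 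As $\mathrm{H}^{1}(\mathbb{Z}/n\mathbb{Z},\mathbb{Z})=0$, this yields $\mathrm{H}^{1}(G_0,\mathrm{NS}(\overline{X}))=0$, so $\mathrm{H}^{1}(G_0,\Pic(\overline{X}))$ is a quotient of $\mathrm{H}^{1}(G_0,\Pic^{0}(\overline{X}))$ by the image of the connecting map out of $\mathrm{NS}(\overline{X})^{G_0}$.

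It then remains to analyse the divisible part $\Pic^{0}(\overline{X})\cong\widehat{E}\times\widehat{F}$, on which $\iota$ acts as $\mathrm{id}_{\widehat{E}}\times\widehat{\Psi(g_0)}$: translations act trivially on $\Pic^{0}$, while $\Psi(g_0)$ induces the dual automorphism on $\widehat{F}$. Non-isogeny makes this action a product, so $\mathrm{H}^{1}(G_0,\Pic^{0}(\overline{X}))$ decomposes into an $\widehat{E}$-contribution and an $\widehat{F}$-contribution, each of which I would compute through $\mathrm{H}^{1}(\mathbb{Z}/n\mathbb{Z},M)=\ker(N)/\Ima(\iota-1)$ with $N=1+\iota+\dots+\iota^{n-1}$. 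The goal is then to check that whatever survives is annihilated either by the image of $\mathrm{NS}(\overline{X})^{G_0}$ under the connecting map or by the transgression $d_2\colon E_2^{1,1}\to E_2^{3,0}=\mathrm{H}^{3}(G_0,\mathbb{C}^{\times})\cong\mu_n$, forcing $E_\infty^{1,1}=0$.

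This last step is the main obstacle. The rational cohomology is blind to it: since $p_g(X)=0$ one has $\mathrm{H}^{2}(X,\mathbb{Q})=\mathrm{NS}(X)\otimes\mathbb{Q}$, and every $G_0$-invariant class on $\overline{X}$ is algebraic, so the entire content sits in the mod-$n$ structure of $\Pic^{0}(\overline{X})$ and the precise $\iota$-action on $\widehat{E}[n]\times\widehat{F}[n]$. Conceptually, non-isogeny is exactly the condition keeping $\mathrm{NS}(\overline{X})$ minimal and hence the transcendental part of $\mathrm{H}^{2}(\overline{X})$ maximal, and it is this maximality that prevents a nonzero class of $\Br(X)$ from becoming algebraic, i.e.\ trivial in $\Br(\overline{X})$, after pullback. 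I therefore expect the hardest and most case-dependent computation to be the explicit determination of the $\iota$-action on $\widehat{E}[n]\times\widehat{F}[n]$ together with the connecting and transgression maps, carried out type by type for $n=2,3,4$.
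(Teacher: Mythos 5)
Your proposal is not a proof: it is a (correct) cohomological reduction followed by an acknowledged promissory note exactly where the content of the theorem lies. Note first that the paper itself does not prove this statement; it quotes it from \cite[Theorem 5.3.]{tirab}, and the proof there runs along essentially the lines you set up (Hochschild--Serre for the cover $\pi$, reduction to $\mathrm{H}^1(G_0,\Pic(\overline{X}))$, connecting maps and transgression). Your first two paragraphs are sound: $\mathrm{H}^2(G_0,\mathbb{C}^{\times})=0$ by divisibility, so $\ker\pi^{*}\cong E_\infty^{1,1}=\ker\bigl(d_2\colon \mathrm{H}^1(G_0,\Pic(\overline{X}))\to \mathrm{H}^3(G_0,\mathbb{C}^{\times})\bigr)$, and non-isogeny does give $\mathrm{NS}(\overline{X})\cong\mathbb{Z}^2$ with trivial $G_0$-action, hence $\mathrm{H}^1(G_0,\mathrm{NS}(\overline{X}))=0$.

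But the last step, which you yourself flag as ``the main obstacle'', is the entire theorem, and it cannot be dispatched by general principles: the group you must kill is genuinely nonzero before the connecting map and the transgression are brought in. For instance, for type $1$ the generator $\iota$ acts on $\Pic^0(\overline{X})\cong\widehat{E}\times\widehat{F}$ trivially on $\widehat{E}$ (translation part) and by $-1$ on $\widehat{F}$, so $\mathrm{H}^1(G_0,\Pic^0(\overline{X}))\cong\widehat{E}[2]\cong(\mathbb{Z}/2\mathbb{Z})^2\neq 0$, while $\mathrm{H}^3(G_0,\mathbb{C}^{\times})\cong\mathbb{Z}/2\mathbb{Z}$ can kill at most an index-two subgroup. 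Injectivity therefore forces the connecting map $\delta\colon\mathrm{NS}(\overline{X})^{G_0}\to \mathrm{H}^1(G_0,\Pic^0(\overline{X}))$ to have nontrivial image, and computing $\delta$ (i.e.\ computing the classes $\iota^{*}L\otimes L^{-1}\in\Pic^0(\overline{X})$ for lifts $L$ of the generators of $\mathrm{NS}(\overline{X})$, which depends on the torsion points defining the bielliptic action) together with $d_2$, type by type for $n=2,3,4$, is exactly the work you defer --- and exactly where the non-isogeny hypothesis acts concretely, not merely by ``keeping $\mathrm{NS}(\overline{X})$ minimal''. Your final paragraph substitutes a heuristic about transcendental cohomology for this computation, so as it stands the proposal establishes only the standard cohomological framing, not the statement.
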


Since the property that two elliptic curves $E$ and $F$ are not isogenous is very general in the moduli of these curves, a very general bielliptic surface (in some "moduli" sense) has injective Brauer map. Thus if $X$ is a bielliptic surface with injective Brauer map then the pullback $\overline{\mathcal{A}}$ on $\overline{X}$ of an Azumaya algebra $\mathcal{A}$ constructed in Proposition \ref{repr} represents a nontrivial class in $\Br(\overline{X})$.

\section{Noncommutative Picard schemes and deformations}\label{4}
In this section we first start more generally with a smooth projective $d$-dimensional variety $W$ and an Azumaya algebra $\mathcal{A}$ on $W$. We can think of the pair $(W,\mathcal{A})$ as a noncommutative version of $W$. We want to study moduli schemes of sheaves on such noncommutative pairs.

\begin{defi}
	A sheaf $E$ on $W$ is called a generically simple torsion free $\mathcal{A}$-module, if $E$ is a left $\mathcal{A}$-module such that 
	\begin{itemize}
		\item $E$ is coherent and torsion free as a $\mathcal{O}_W$-module
		\item the stalk $E_{\eta}$ over the generic point $\eta\in W$ is a simple module over $\mathcal{A}_{\eta}$.
	\end{itemize}
	If $\mathcal{A}_{\eta}$ is even a central simple division algebra over $\mathbb{C}(W)$ then such a module is also called a torsion free $\mathcal{A}$-module of rank one. 
\end{defi}

\begin{rem}
	An $\mathcal{A}$-module is locally projective if and only if it is locally free as an $\mathcal{O}_W$-module. If $\mathcal{A}_{\eta}$ is a central simple division algebra then locally projective $\mathcal{A}$-modules of rank one can be thought of as line bundles on the noncommutative variety $(W,\mathcal{A})$. Furthermore a generically simple torsion free $\mathcal{A}$-module is simple, see the argument after Remark 1.1. in \cite{hoff}.
\end{rem}

By fixing the Hilbert polynomial $P$ of such sheaves (with respect to a chosen ample line bundle), Hoffmann and Stuhler showed that these modules are classified by a moduli scheme, see \cite[Theorem 2.4. iii), iv)]{hoff}:

\begin{thm}
	There is a projective moduli scheme $\M_{\mathcal{A}/W,P}$ classifying generically simple  torsion free $\mathcal{A}$-modules with Hilbert polynomial $P$ on $W$.
\end{thm}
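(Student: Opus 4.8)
The plan is to realize $\M_{\mathcal{A}/W,P}$ as the quotient, by a projective general linear group, of an open locus in a suitable Quot scheme, following the GIT construction of moduli of sheaves (Gieseker, Maruyama, Simpson) but using generic simplicity in place of a stability condition: the gain from the $\mathcal{A}$-module structure is that every object is automatically ``stable,'' so no stability parameter is needed. First I would fix an ample $\mathcal{O}_W(1)$ and record the rigidity coming from generic simplicity: any $\mathcal{A}$-submodule $F\subseteq E$ satisfies, at the generic point, $F_\eta=0$ or $F_\eta=E_\eta$ since $E_\eta$ is simple, so $F$ is either $0$ or of full $\mathcal{O}_W$-rank; in particular $E$ is stable as an $\mathcal{A}$-module. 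I would then invoke boundedness of the generically simple torsion free $\mathcal{A}$-modules with Hilbert polynomial $P$, which follows from the general boundedness theorems for coherent sheaves together with this $\mathcal{A}$-stability. Boundedness furnishes an $N\gg 0$ for which every such $E$ is $N$-regular, so $E(N)$ is globally generated with $h^0(E(N))=P(N)=:V$ and $h^i(E(N))=0$ for $i>0$.

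Next I would set up the Quot scheme. Put $H=\mathbb{C}^V$ and $\mathcal{H}=\mathcal{A}(-N)\otimes_{\mathbb{C}}H$. Composing the multiplication surjection $\mathcal{A}\otimes_{\mathcal{O}_W}E(N)\twoheadrightarrow E(N)$ with $\mathrm{id}_{\mathcal{A}}$ tensored into the $\mathcal{O}_W$-surjection $H^0(E(N))\otimes\mathcal{O}_W\twoheadrightarrow E(N)$ produces, after untwisting and choosing an isomorphism $H\cong H^0(E(N))$, an $\mathcal{A}$-linear surjection $\mathcal{H}\twoheadrightarrow E$. I would therefore work with the Quot scheme $\mathcal{Q}$ of $\mathcal{A}$-linear quotients of $\mathcal{H}$ with Hilbert polynomial $P$; this is the closed subscheme of the Grothendieck Quot scheme of $\mathcal{O}_W$-quotients cut out by the closed condition that the kernel be an $\mathcal{A}$-submodule, so $\mathcal{Q}$ is projective. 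Inside $\mathcal{Q}$ let $R$ be the open locus of quotients $[\mathcal{H}\to E]$ for which $E$ is generically simple torsion free and the induced map $H\to H^0(E(N))$ is an isomorphism; both are open conditions.

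The group $\mathrm{GL}(H)$ acts on $R$ by precomposition, scalars act trivially, and two points lie in the same $\mathrm{PGL}(H)$-orbit exactly when the corresponding $\mathcal{A}$-modules are isomorphic. Because a generically simple torsion free $\mathcal{A}$-module is simple, i.e. $\End_{\mathcal{A}}(E)\cong\mathbb{C}$ (as recorded after Remark~1.1 in \cite{hoff}), every orbit has trivial stabilizer and the action is free. I would then linearize the action by a Grothendieck-type determinant line bundle on $\mathcal{Q}$ and prove that every point of $R$ is GIT-stable, with stable equal to semistable. Via the Hilbert--Mumford numerical criterion this reduces to the statement that $E$ admits no proper nonzero $\mathcal{A}$-submodule whose normalized Hilbert polynomial is at least that of $E$, which is exactly the full-rank dichotomy above. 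Granting stability, the GIT quotient $R/\!\!/\mathrm{PGL}(H)=R^{s}/\mathrm{PGL}(H)$ exists as a projective scheme, is a geometric quotient, and corepresents the moduli functor of generically simple torsion free $\mathcal{A}$-modules with Hilbert polynomial $P$; this is the desired $\M_{\mathcal{A}/W,P}$.

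The main obstacle is precisely this stability step: one must show that generic simplicity upgrades to GIT-stability for the chosen linearization, with stable $=$ semistable, since this is what forces the orbit space to be separated and projective rather than merely an algebraic space. It is also what makes properness go through via the valuative criterion, using that flat limits of generically simple torsion free $\mathcal{A}$-modules remain of this type (torsion-freeness and generic simplicity being preserved after the usual saturation of the limit).
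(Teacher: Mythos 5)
The paper does not prove this theorem at all --- it is quoted verbatim from Hoffmann--Stuhler \cite[Theorem 2.4 iii), iv)]{hoff} --- so the only meaningful comparison is with the proof given in that reference. Your plan (the $\mathcal{A}$-linear Quot scheme as a closed, hence projective, subscheme of Grothendieck's Quot scheme; the free $\mathrm{PGL}(H)$-action on the open locus of generically simple torsion free quotients; the Hilbert--Mumford criterion reduced to the dichotomy that nonzero $\mathcal{A}$-submodules have full rank, which works precisely because subspaces $V'\subseteq H$ generate $\mathcal{A}$-submodules of $E$; a valuative-criterion argument for projectivity) is essentially the same GIT construction that Hoffmann and Stuhler carry out, with two caveats: your boundedness step needs an idea you do not state, namely that for a destabilizing subsheaf $F\subseteq E$ the image of $\mathcal{A}\otimes F\rightarrow E$ is a full-rank $\mathcal{A}$-submodule, giving $\mu_{\max}(E)\leqslant \mu(E)-\mu_{\min}(\mathcal{A})$, which is what makes the general boundedness criteria apply to the underlying $\mathcal{O}_W$-modules (the full-rank dichotomy by itself says nothing about $\mathcal{O}_W$-subsheaves); and the limit correction in the properness argument is Langton's iterated elementary modification adapted to $\mathcal{A}$-modules, not a saturation, since saturating the special fiber would change the Hilbert polynomial.
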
 

According to \cite[Page 379]{hoff}, the moduli scheme of all generically simple $\mathcal{A}$-modules is given by
\begin{equation*}
	\M_{\mathcal{A}/W}:= \coprod\limits_{P} \M_{\mathcal{A}/W,P}=\coprod\limits_{c_1,\ldots,c_d} \M_{\mathcal{A}/W,c_1,\ldots,c_d}.
\end{equation*}
By the remark above $\M_{\mathcal{A}/W}$ can be understood as the Picard scheme $\Pic(\mathcal{A})$ of the noncommutative variety $(W,\mathcal{A})$ in case the generic stalk is central simple division algebra. 

We also note the following useful facts: 

\begin{rem}\label{eflat}
	For a torsion free $\mathcal{A}$-module $E$ of rank one on $X$, the $\mathcal{A}$-modules $E^{**}$ and $E\otimes L$ for $L\in \Pic(X)$ are also torsion free of rank one. In addition $\overline{E}$ is a torsion free $\overline{\mathcal{A}}$-module of rank one on $\overline{X}$ since $\pi$ is flat.
\end{rem}

We want to study these moduli schemes for a noncommutative bielliptic surfaces $(X,\mathcal{A})$ with injective Brauer map. 

Since the nontrivial elements in $\Br(X)$ can be represented by Azumaya algebras which are generically central simple division algebras, we can work with torsion free $\mathcal{A}$-modules of rank one in the following. Note that the $\mathcal{O}_X$-rank of a torsion free $\mathcal{A}$-module of rank one $E$ is
\begin{equation*}
	\rk_{\mathcal{O}_X}(E)=\begin{cases}
		4 & \text{if}\,\,\ord\left(\left[\mathcal{A} \right]  \right)=2\\
		9 & \text{if}\,\,\ord\left(\left[\mathcal{A} \right]  \right)=3.
	\end{cases}
\end{equation*}

We can now state the main result of this section, whose proof is literally the same as for \cite[Theorem 4.10]{fr}.

\begin{thm}\label{thm2}
	Let $(X,\mathcal{A})$ be noncommutative bielliptic surface with injective Brauer map.
	\begin{enumerate}[i)]
		\item The moduli scheme $\M_{\mathcal{A}/X}$ of torsion free $\mathcal{A}$-modules of rank one is smooth.
		\item Every torsion free $\mathcal{A}$-module of rank one can be deformed into a locally projective $\mathcal{A}$-module, that is the locus $\M_{\mathcal{A}/X}^{lp}$ of locally projective $\mathcal{A}$-modules is dense in $\M_{\mathcal{A}/X}$.
		\item For fixed Chern classes $c_1$ and $c_2$ we have 
		\begin{equation*}
			\dim \M_{\mathcal{A}/X,c_1,c_2}=\begin{cases}
				\,\frac{1}{4}\left(8c_2-3c_1^2 \right) -c_2(\mathcal{A})+1 & \text{if}\,\,\ord\left(\left[\mathcal{A} \right]  \right)=2\\
				\noalign{\vskip9pt}
				\,\frac{1}{9}\left(18c_2-8c_1^2 \right) -c_2(\mathcal{A})+1 & \text{if}\,\,\ord\left(\left[\mathcal{A} \right]  \right)=3.
			\end{cases}
		\end{equation*}
	\end{enumerate}
\end{thm}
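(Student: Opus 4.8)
The plan is to handle all three parts through the deformation theory of $\mathcal{A}$-modules: at a point $[E]$ the tangent space of $\M_{\mathcal{A}/X}$ is $\Ext^1_{\mathcal{A}}(E,E)$ and the obstructions lie in $\Ext^2_{\mathcal{A}}(E,E)$, so everything is governed by these groups. A torsion free $\mathcal{A}$-module $E$ of rank one is generically simple, hence simple, so that $\Hom_{\mathcal{A}}(E,E)\cong\mathbb{C}$; the substance of the proof is therefore the computation of $\Ext^2_{\mathcal{A}}(E,E)$ and of the Euler characteristic $\chi_{\mathcal{A}}(E,E)=\sum_i(-1)^i\dim\Ext^i_{\mathcal{A}}(E,E)$.

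For (i) I would show the obstruction space vanishes. Serre duality (Proposition \ref{serre}) gives $\Ext^2_{\mathcal{A}}(E,E)\cong\Hom_{\mathcal{A}}(E,E\otimes\omega_X)^{\vee}$, and since the canonical cover is the cyclic cover induced by the torsion line bundle $L=\omega_X$ of Section \ref{1}, it suffices to prove $\Hom_{\mathcal{A}}(E,E\otimes\omega_X)=0$. The inclusion of $E$ into its reflexive hull $E^{**}$ induces an injection $\Hom_{\mathcal{A}}(E,E\otimes\omega_X)\hookrightarrow\Hom_{\mathcal{A}}(E,E^{**}\otimes\omega_X)$, so I only need the latter group to vanish, which is exactly Lemma \ref{double2} with $i=1$, provided $\overline{E^{**}}$ is a simple $\overline{\mathcal{A}}$-module. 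This is where the injectivity of the Brauer map enters: it forces $[\overline{\mathcal{A}}]\neq 0$, so the degree-$n$ algebra $\overline{\mathcal{A}}_{\bar\eta}$ is nontrivial of prime degree and hence a division algebra; then $\overline{E^{**}}$, which is a rank one torsion free $\overline{\mathcal{A}}$-module by Remark \ref{eflat}, is generically simple and therefore simple. Consequently $\Ext^2_{\mathcal{A}}(E,E)=0$ at every point, and $\M_{\mathcal{A}/X}$ is smooth.

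For (iii), smoothness together with $\Hom_{\mathcal{A}}(E,E)\cong\mathbb{C}$ reduces the local dimension to $\dim\Ext^1_{\mathcal{A}}(E,E)=1-\chi_{\mathcal{A}}(E,E)$, so the formula becomes a Riemann--Roch computation. I would evaluate $\chi_{\mathcal{A}}(E,E)$ by Hirzebruch--Riemann--Roch on $X$, inserting the numerical data of a rank one module ($\rk_{\mathcal{O}_X}E=n^2$, $c_1(E)=c_1$, $c_2(E)=c_2$) together with $c_2(\mathcal{A})$, and then using the bielliptic vanishings $\chi(\mathcal{O}_X)=0$, $c_2(X)=0$ and the numerical triviality of $K_X$ to discard every term carrying the geometry of $X$; what remains is the stated expression. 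Since $\chi_{\mathcal{A}}(E,E)$ depends only on $c_1,c_2$, the scheme $\M_{\mathcal{A}/X,c_1,c_2}$ is moreover smooth of pure dimension equal to this number.

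Statement (ii) I would then deduce from (i) and (iii) by a dimension count, and this is where I expect the main difficulty. Every rank one torsion free $E$ sits in a sequence $0\to E\to E^{**}\to T\to 0$ with $T$ a torsion $\mathcal{A}$-module of some $\mathcal{O}_X$-length $m$, so that $c_2(E)=c_2(E^{**})+m$, and the non-locally-projective modules form strata indexed by $m\geqslant 1$. The stratum for a given $m$ maps to the locus of locally free reflexive hulls, of dimension at most $\dim\M_{\mathcal{A}/X,c_1,c_2-m}$, with fibre the scheme of length-$m$ torsion $\mathcal{A}$-quotients of $E^{**}$; an \'etale-local Morita equivalence $\mathcal{A}\cong M_n(\mathcal{O}_X)$ identifies this fibre with a Quot scheme of length-$(m/n)$ quotients of a locally free rank $n$ sheaf, of dimension $\tfrac{m}{n}(n+1)$. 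By the formula in (iii) the base drops by exactly $2m$ when $c_2$ is lowered by $m$, and since $\tfrac{m}{n}(n+1)<2m$ holds precisely because $n=\deg(\mathcal{A})\geqslant 2$, each stratum with $m\geqslant 1$ has dimension strictly less than $\dim\M_{\mathcal{A}/X,c_1,c_2}$. As $\M_{\mathcal{A}/X,c_1,c_2}$ is smooth of pure dimension, the locally projective locus, being the complement of these strata, is dense. The crux is thus the estimate of the dimension of the Quot-type strata and the resulting strict inequality: it is only the fact that $\mathcal{A}$ has degree at least two that prevents a full-dimensional boundary and makes $\M_{\mathcal{A}/X}^{lp}$ genuinely dense rather than merely open.
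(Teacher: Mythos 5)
Your proposal is correct and is essentially the paper's own argument (the paper simply defers to \cite[Theorem 4.10]{fr}, which in turn follows \cite[Theorem 3.6]{hoff}): smoothness from $\Ext^2_{\mathcal{A}}(E,E)\cong\left(\Hom_{\mathcal{A}}(E,E\otimes\omega_X)\right)^{\vee}=0$ via Lemma \ref{double2} and the fact that injectivity of the Brauer map makes $\overline{\mathcal{A}}$ generically a division algebra, the dimension formula from $\ext^1_{\mathcal{A}}(E,E)=1-\chi_{\mathcal{A}}(E,E)$ plus Riemann--Roch, and density of the locally projective locus from the double-dual stratification together with the Quot-scheme dimension count $\frac{m}{n}(n+1)<2m$. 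The only (harmless) slip is notational: your $n$ is $\deg(\mathcal{A})\in\{2,3\}$, which for surfaces of type $3$ differs from the degree ($=4$) of the canonical cover used in Section \ref{1}; the argument is unaffected, since it only needs $\deg(\overline{\mathcal{A}})$ prime and $[\overline{\mathcal{A}}]\neq 0$.
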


\begin{rem}
	Part (2) of Theorem \ref{thm2} is a new phenomenon in the noncommutative case. As noted in \cite[Remark 1.6]{hoff}, in the classical case $\mathcal{A}=\mathcal{O}_X$ just torsion free and locally projective generically simple $\mathcal{A}$-modules lie in different connected components of the moduli scheme. The reason is that locally projective $\mathcal{A}$-modules do not satisfy the valuative criterion for properness if $\mathcal{A}$ is nontrivial. That is the reason why one has to allow for just torsion free $\mathcal{A}$-modules to get a proper noncommutative Picard scheme $\Pic(\mathcal{A})$.
\end{rem}

\section{Lagrangian subschemes}
Let $\iota: \overline{X}\rightarrow \overline{X}$ still be a generator of $\Aut(\overline{X}/X)$. This map induces an automorphism
\begin{equation*}
	\iota^{*}: \M_{\overline{\mathcal{A}}/\overline{X},\overline{c_1},\overline{c_2}} \rightarrow \M_{\overline{\mathcal{A}}/\overline{X},\overline{c_1},\overline{c_2}},\,\,\, \left[F\right]\mapsto \left[\iota^{*}F\right]. 
\end{equation*}
Moreover, using Remark \ref{eflat}, the projection $\pi:\overline{X}\rightarrow X$ induces a morphism
\begin{equation*}
	\pi^{*}: \M_{\mathcal{A}/X,c_1,c_2} \rightarrow \M_{\overline{\mathcal{A}}/\overline{X},\overline{c_1},\overline{c_2}},\,\,\,\, \left[E\right] \mapsto \left[\overline{E}\right],  
\end{equation*}
where $\M_{\overline{\mathcal{A}}/\overline{X},\overline{c_1},\overline{c_2}}$ is the corresponding moduli scheme on the associated canonical cover. By \cite[Theorem 3.6.]{hoff}, the latter moduli space is smooth and posseses a symplectic structure.

Our goal in this section is to understand the morphisms $\iota^{*}$ and $\pi^{*}$ as well as their connection to the symplectic structure.

\begin{thm}
	The image of $\pi^{*}$ coincides with the fixed locus of $\iota^{*}$, that is we have $\Ima(\pi^{*})=\Fix(\iota^{*})$. The latter space is a smooth projective subscheme in $\M_{\overline{\mathcal{A}}/\overline{X},\overline{c_1},\overline{c_2}}$. Furthermore the restriction of the symplectic form $\sigma$ on the tangent bundle of $\M_{\overline{\mathcal{A}}/\overline{X},\overline{c_1},\overline{c_2}}$ to $\Ima(\pi^{*})$ vanishes identically.
\end{thm}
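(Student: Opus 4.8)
The plan is to treat the three assertions in order, using the descent theorem for the first, a fixed-point argument for the second, and the naturality of the trace pairing together with $p_g(X)=0$ for the third. For the equality $\Ima(\pi^{*})=\Fix(\iota^{*})$ I would argue as follows. Since $\iota$ is a deck transformation of the Galois cover we have $\pi\circ\iota=\pi$, so for any $\mathcal{A}$-module $E$
\begin{equation*}
	\iota^{*}\overline{E}=\iota^{*}\pi^{*}E\cong(\pi\circ\iota)^{*}E=\pi^{*}E=\overline{E},
\end{equation*}
and this isomorphism is $\overline{\mathcal{A}}$-linear because $\iota^{*}\overline{\mathcal{A}}\cong\overline{\mathcal{A}}$; hence $\iota^{*}[\overline{E}]=[\overline{E}]$ and $\Ima(\pi^{*})\subseteq\Fix(\iota^{*})$. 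Conversely, a point $[F]\in\Fix(\iota^{*})$ satisfies $\iota^{*}F\cong F$ as $\overline{\mathcal{A}}$-modules; the moduli points are generically simple torsion free, hence simple, so Theorem \ref{desc} produces an $\mathcal{A}$-module $E$ with $F\cong\overline{E}$. By Corollary \ref{homvan} and Remark \ref{eflat} this $E$ is again a torsion free $\mathcal{A}$-module of rank one, so $[F]=\pi^{*}[E]\in\Ima(\pi^{*})$; matching the finitely many Chern data $(c_1,c_2)$ lying over $(\overline{c_1},\overline{c_2})$ is routine.

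For smoothness and projectivity: the moduli scheme $\M_{\overline{\mathcal{A}}/\overline{X},\overline{c_1},\overline{c_2}}$ is projective by \cite{hoff}, so its closed fixed locus $\Fix(\iota^{*})$ is projective. Since $\iota$ has order $n$ in $\Aut(\overline{X}/X)$, the induced $\iota^{*}$ is a finite-order automorphism of a smooth scheme over $\mathbb{C}$, and the fixed locus of such an automorphism is smooth. At a point $[\overline{E}]$ the tangent space to $\Fix(\iota^{*})$ is therefore the $\iota^{*}$-invariant part of $T_{[\overline{E}]}\M_{\overline{\mathcal{A}}/\overline{X},\overline{c_1},\overline{c_2}}=\Ext^{1}_{\overline{\mathcal{A}}}(\overline{E},\overline{E})$. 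Using the $\Ext$-analogue of Lemma \ref{hom},
\begin{equation*}
	\Ext^{1}_{\overline{\mathcal{A}}}(\overline{E},\overline{E})\cong\bigoplus_{i=0}^{n-1}\Ext^{1}_{\mathcal{A}}(E,E\otimes L^{i}),
\end{equation*}
on whose $i$-th summand $\iota^{*}$ acts through the $i$-th power of a primitive $n$-th root of unity, the invariant part is exactly the $i=0$ summand, that is the image of the pullback $\pi^{*}\colon\Ext^{1}_{\mathcal{A}}(E,E)\to\Ext^{1}_{\overline{\mathcal{A}}}(\overline{E},\overline{E})$.

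It then remains to show that the symplectic form vanishes on $\Ima(\pi^{*})=\Fix(\iota^{*})$. The form $\sigma$ of \cite{hoff} at $[\overline{E}]$ is the Yoneda pairing into $\Ext^{2}_{\overline{\mathcal{A}}}(\overline{E},\overline{E})$ followed by the reduced trace to $\mathrm{H}^{2}(\overline{X},\mathcal{O}_{\overline{X}})\cong\mathbb{C}$, the target being one-dimensional since $\omega_{\overline{X}}\cong\mathcal{O}_{\overline{X}}$ (Proposition \ref{serre} and simplicity of $\overline{E}$). By the previous paragraph it suffices to check that $\sigma(\pi^{*}\alpha,\pi^{*}\beta)=0$ for $\alpha,\beta\in\Ext^{1}_{\mathcal{A}}(E,E)$. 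Granting that the Yoneda product and the reduced trace are natural under pullback along $\pi$,
\begin{equation*}
	\sigma(\pi^{*}\alpha,\pi^{*}\beta)=\tr_{\overline{X}}(\pi^{*}\alpha\cup\pi^{*}\beta)=\tr_{\overline{X}}(\pi^{*}(\alpha\cup\beta))=\pi^{*}\tr_{X}(\alpha\cup\beta),
\end{equation*}
where $\tr_{X}\colon\Ext^{2}_{\mathcal{A}}(E,E)\to\mathrm{H}^{2}(X,\mathcal{O}_{X})$ and $\pi^{*}\colon\mathrm{H}^{2}(X,\mathcal{O}_{X})\to\mathrm{H}^{2}(\overline{X},\mathcal{O}_{\overline{X}})$. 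Since $X$ is bielliptic, $p_g(X)=0$, i.e. $\mathrm{H}^{2}(X,\mathcal{O}_{X})=0$, so $\tr_{X}(\alpha\cup\beta)=0$ and the pairing vanishes.

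The main obstacle is precisely the naturality claim in the last step, namely the commutativity of the square relating the reduced-trace maps $\tr_{X}$ and $\tr_{\overline{X}}$ with the pullbacks $\pi^{*}$ on the relevant $\Ext$- and cohomology groups. Establishing this requires unwinding the Hoffmann--Stuhler construction of the reduced trace and the symplectic form and checking its functoriality for the flat finite morphism $\pi$. Once that compatibility is in place, the vanishing is immediate from $p_g(X)=0$, exactly paralleling the Enriques case treated in \cite{fr}.
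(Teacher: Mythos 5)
Your first two steps coincide with the paper's own proof: the inclusion $\Ima(\pi^{*})\subseteq\Fix(\iota^{*})$ from $\pi\circ\iota=\pi$, the reverse inclusion via Theorem \ref{desc} applied to the (generically simple, hence simple) modules parametrized by the moduli scheme, and smoothness and projectivity of the fixed locus of a finite-order automorphism of a smooth projective scheme, for which the paper cites \cite[3.1,3.4]{edix}. Your extra identification of the tangent space of $\Fix(\iota^{*})$ at $[\overline{E}]$ with the invariant summand $\Ext^{1}_{\mathcal{A}}(E,E)$ inside $\bigoplus_{i}\Ext^{1}_{\mathcal{A}}(E,E\otimes\omega_X^{i})$ is a correct supplement, consistent with (though not needed for) the paper's argument.

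The gap is exactly where you flag it: your third step hinges on the commutativity of the square relating $\tr_{X}$ and $\tr_{\overline{X}}$ with the pullbacks, and you only ``grant'' this. The paper's proof is arranged precisely so that no compatibility of trace maps is ever needed: it shows that the source of the pullback on $\Ext^{2}$ is already zero. Since $\pi$ is the canonical cover, the torsion line bundle inducing it is $\omega_{X}$ itself; as $\overline{E}$ is simple, Corollary \ref{homvan} gives $\Hom_{\mathcal{A}}(E,E\otimes\omega_{X})=0$, and Serre duality (Proposition \ref{serre}) then yields
\begin{equation*}
	\Ext^{2}_{\mathcal{A}}(E,E)\cong\left(\Hom_{\mathcal{A}}(E,E\otimes\omega_{X})\right)^{\vee}=0.
\end{equation*}
Hence $\pi^{*}\alpha\cup\pi^{*}\beta=\pi^{*}(\alpha\cup\beta)=0$ in $\Ext^{2}_{\overline{\mathcal{A}}}(\overline{E},\overline{E})$, using only the commutativity of the Yoneda-product square (which you also invoke), and the form $\sigma$ vanishes on $\Ima(\pi^{*})$ no matter how the identification $\Ext^{2}_{\overline{\mathcal{A}}}(\overline{E},\overline{E})\cong\mathbb{C}$ is normalized. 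So your appeal to $p_g(X)=0$ plus trace functoriality can be replaced, with tools you already cite (Proposition \ref{serre}, Corollary \ref{homvan}, Remark \ref{eflat}), by the vanishing of $\Ext^{2}_{\mathcal{A}}(E,E)$. As written, your argument is incomplete until the trace compatibility is actually established; it is plausibly true (the reduced trace of an Azumaya algebra is \'{e}tale-locally the matrix trace, hence commutes with flat pullback), but proving it means unwinding the Hoffmann--Stuhler construction, which is exactly the work you defer and which the paper's route renders unnecessary.
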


\begin{proof}
	We certainly have $\Ima(\pi^{*})\subset \Fix(\iota^{*})$.
	By Theorem \ref{desc} we also have the inclusion $\Fix(\iota^{*})\subset \Ima(\pi^{*})$. So $\Ima(\pi^{*})=\Fix(\iota^{*})$. The subscheme $\Fix(\iota^{*})$ is projective and smooth by \cite[3.1,3.4]{edix}.
	
	As $\overline{E}$ is simple by Remark \ref{eflat}, using Proposition \ref{serre} and Corollary \ref{homvan} we compute:
	\begin{equation*}
		\Ext^2_{\mathcal{A}}(E,E)\cong \left(\Hom_{\mathcal{A}}(E,E\otimes\omega_X) \right)^{\vee} =0\,\,\,\,\,\text{for all $[E]\in \M_{\mathcal{A}/X,c_1,c_2}$}.
	\end{equation*}
	Now the vanishing of the symplectic form follows similar to \cite[Proof of (3), p.92]{kim} from the following commutative diagram:
	\begin{equation*}
		\begin{CD}
			\Ext^1_{\mathcal{A}}(E,E) \times @. \Ext^1_{\mathcal{A}}(E,E) @>>> \Ext^2_{\mathcal{A}}(E,E)\\
			@V\pi^{*}VV @VV\pi^{*}V @VV\pi^{*}V\\
			\Ext^1_{\overline{\mathcal{A}}}(\overline{E},\overline{E}) \times @. \Ext^1_{\overline{\mathcal{A}}}(\overline{E},\overline{E}) @>>> \Ext^2_{\overline{\mathcal{A}}}(\overline{E},\overline{E})
		\end{CD}
	\end{equation*}
	using Mukai's description of the symplectic form on the tangent bundle of $\M_{\overline{\mathcal{A}}/\overline{X},\overline{c_1},\overline{c_2}}$.
\end{proof}

\begin{rem}
	The vanishing of the symplectic form on $\Ima(\pi^{*})$ can also be seen by noting that $\iota^{*}$ is an antisymplectic automorphism of $\M_{\overline{\mathcal{A}}/\overline{X},\overline{c_1},\overline{c_2}}$. More exactly we have $\iota^{*}\sigma=\zeta_n\sigma$ for a nontrivial $n$-th root of unity $\zeta_n$. This follows as in the proof of \cite[Lemma 4.7.]{fr}. One just has to note that $\iota^{*}$ acts as multiplication by $\zeta_n$ on $H^0(\overline{X},\omega_{\overline{X}})$, since $H^0(X,\omega_X)=0$.
\end{rem}

The following two results also have analogues in the commutative case, see \cite[Proposition 9.1]{nuer}.

\begin{thm}\label{moduli}
	Let $(X,\mathcal{A})$ be noncommutative bielliptic surface with injective Brauer map. The pullback map 
	\begin{equation*}
		\pi^{*}: \M_{\mathcal{A}/X,c_1,c_2} \rightarrow \M_{\overline{\mathcal{A}}/\overline{X},\overline{c_1},\overline{c_2}}
	\end{equation*}
	realizes $\M_{\mathcal{A}/X,c_1,c_2}$ as a finite \'{e}tale cover of the smooth subscheme $\Fix(\iota^{*})\subset \M_{\overline{\mathcal{A}}/\overline{X},\overline{c_1},\overline{c_2}}$.
\end{thm}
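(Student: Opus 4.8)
The plan is to exhibit $\pi^{*}$, regarded as a morphism onto $\Fix(\iota^{*})$, as a proper morphism of smooth schemes whose differential is an isomorphism at every point, and then to conclude that it is finite \'etale. First I would set up the factorization: since $q\circ\iota=q$ we have $\iota^{*}\circ\pi^{*}=\pi^{*}$, so $\pi^{*}$ factors through the fixed subscheme $\Fix(\iota^{*})$, and by the preceding theorem this factorization is surjective onto the smooth projective subscheme $\Fix(\iota^{*})\subset \M_{\overline{\mathcal{A}}/\overline{X},\overline{c_1},\overline{c_2}}$. Combined with the smoothness of $\M_{\mathcal{A}/X,c_1,c_2}$ from Theorem \ref{thm2}, we are reduced to analysing the differential and the properness of $\pi^{*}$.

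For the differential I would first record the $\Ext^1$-analogue of Lemma \ref{hom}. As $q$ is finite flat \'etale with $q_{*}\mathcal{O}_{\overline{X}}\cong\bigoplus_{i=0}^{n-1}L^i$, the adjunction between $q^{*}=\overline{(-)}$ and the exact functor $q_{*}$, together with the projection formula, yields for all $\mathcal{A}$-modules $E,F$ isomorphisms $\Ext^{j}_{\overline{\mathcal{A}}}(\overline{E},\overline{F})\cong\bigoplus_{i=0}^{n-1}\Ext^{j}_{\mathcal{A}}(E,F\otimes L^i)$, exactly as in the proof of \cite[Lemma 1.4]{fr}. Taking $F=E$ and $j=1$, the decomposition $q_{*}\mathcal{O}_{\overline{X}}\cong\bigoplus L^i$ is the $\iota^{*}$-eigenspace decomposition, so the induced $\iota^{*}$-action on $\Ext^1_{\overline{\mathcal{A}}}(\overline{E},\overline{E})$ is diagonal, trivial on the summand $\Ext^1_{\mathcal{A}}(E,E)$ with $i=0$ and through a nontrivial character of $\langle\iota\rangle$ on each summand with $1\leqslant i\leqslant n-1$. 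Hence the $\iota^{*}$-invariants are exactly $\Ext^1_{\mathcal{A}}(E,E)$.

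Next I would match this with the tangent map. The tangent spaces are $T_{[E]}\M_{\mathcal{A}/X,c_1,c_2}=\Ext^1_{\mathcal{A}}(E,E)$ and $T_{[\overline{E}]}\M_{\overline{\mathcal{A}}/\overline{X},\overline{c_1},\overline{c_2}}=\Ext^1_{\overline{\mathcal{A}}}(\overline{E},\overline{E})$, and $d\pi^{*}$ is the pullback $q^{*}$, which under the adjunction is precisely the inclusion of the $i=0$ summand. Since $\Fix(\iota^{*})$ is the fixed locus of a finite cyclic action on a smooth scheme, its tangent space at $[\overline{E}]$ is the invariant subspace $\big(\Ext^1_{\overline{\mathcal{A}}}(\overline{E},\overline{E})\big)^{\iota^{*}}$ (the infinitesimal content of \cite[3.1, 3.4]{edix}). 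Comparing with the decomposition above, the corestriction of $d\pi^{*}$ to $T_{[\overline{E}]}\Fix(\iota^{*})$ is the identity of $\Ext^1_{\mathcal{A}}(E,E)$, hence an isomorphism at every point. A morphism of smooth $\mathbb{C}$-schemes of finite type whose differential is an isomorphism everywhere is \'etale, so $\pi^{*}\colon \M_{\mathcal{A}/X,c_1,c_2}\to\Fix(\iota^{*})$ is \'etale. Finally, $\M_{\mathcal{A}/X,c_1,c_2}$ is projective by the theorem of Hoffmann and Stuhler \cite{hoff} and $\Fix(\iota^{*})$ is separated, so $\pi^{*}$ is proper; an \'etale morphism is quasi-finite, and a proper quasi-finite morphism is finite, whence $\pi^{*}$ is a finite \'etale cover.

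The main obstacle is making the differential computation fully rigorous and compatible. One must verify that the decomposition coming from the $\Ext^1$-version of Lemma \ref{hom} genuinely is the $\iota^{*}$-eigenspace decomposition, so that the invariants are exactly the $i=0$ summand, and that under this identification $d\pi^{*}=q^{*}$ is precisely the inclusion of that invariant summand. This is where the cyclic structure of $\Aut(\overline{X}/X)$ and the character decomposition $q_{*}\mathcal{O}_{\overline{X}}\cong\bigoplus L^i$ must be used carefully; once this eigenspace-and-differential matching is in place, the remaining steps are formal.
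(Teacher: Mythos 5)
Your proof is correct, but it reaches \'etaleness by a genuinely different route than the paper. After the same two preliminary steps (factorization of $\pi^{*}$ through $\Fix(\iota^{*})$ and surjectivity via the previous theorem, smoothness of source and target from Theorem \ref{thm2} and \cite{edix}), the paper argues with \emph{fibers} rather than tangent spaces: using the $\Hom$-version of Lemma \ref{hom} together with \cite[Lemma 4.3]{fr}, it shows that $\varphi([E])=\varphi([F])$ forces $E\cong F\otimes\omega_X^{j}$ for exactly one $j$ with $0\leqslant j\leqslant n-1$, so every fiber is the orbit of twisting by $\omega_X$ and consists of exactly $n$ points; this gives unramifiedness and the degree, and flatness then comes from Schaps' miracle-flatness lemma \cite[Lemma, p.675]{schaps} (finite fibers, smooth source and target), whence \'etale. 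You instead work infinitesimally: the $\Ext^{1}$-analogue of Lemma \ref{hom}, the identification of that decomposition as the $\iota^{*}$-eigenspace decomposition, Edixhoven's description of the tangent space of the fixed locus as the invariant subspace, and the identification of $d\pi^{*}$ with the inclusion of the invariant ($i=0$) summand, so that the corestricted differential is an isomorphism everywhere. The step you flag as the main obstacle --- matching the adjunction decomposition with the $\iota^{*}$-action induced by the canonical identification $\iota^{*}\overline{E}\cong\overline{E}$ coming from $\pi\circ\iota=\pi$, and matching $d\pi^{*}$ with the unit map $E\to \pi_{*}\pi^{*}E$ --- is indeed the crux, but it is a standard verification for cyclic \'etale covers in characteristic zero, so your argument goes through. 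The trade-off: the paper's fiber analysis produces the explicit degree $n$ and identifies the deck transformations as twisting by powers of $\omega_X$ (so $\Fix(\iota^{*})$ is the quotient of $\M_{\mathcal{A}/X,c_1,c_2}$ by this $\mathbb{Z}/n\mathbb{Z}$-action), information your argument does not yield; your tangent-space argument avoids both \cite[Lemma 4.3]{fr} and the flatness lemma of Schaps, and is the more robust template when explicit fiber classification is unavailable. Your final step (projective source, separated target, hence proper; \'etale implies quasi-finite; proper plus quasi-finite implies finite) is the same finiteness mechanism the paper leaves implicit.
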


\begin{proof}
	The previous theorem shows that $\pi^{*}$ factors through $\Fix(\iota^{*})$ giving rise to a surjective morphism
	\begin{equation*}
		\varphi: \M_{\mathcal{A}/X,c_1,c_2} \rightarrow \Fix(\iota^{*}).
	\end{equation*}
	Since $\pi: \overline{X}\rightarrow X$ is the canonical cover one has isomorphisms
	\begin{equation*}
		\overline{E\otimes\omega_X^j}\cong \overline{E} \,\,\,\text{for any $0\leqslant j \leqslant n-1$}.
	\end{equation*}
	By Corollary \ref{homvan} the $E\otimes\omega_X^j$ are pairwise non-isomorphic since $\overline{E}$ is simple.
	
	Now assume $\varphi(\left[E\right] )=\varphi(\left[F\right] )$ that is $\overline{E}\cong\overline{F}$ and $\Hom_{\overline{\mathcal{A}}}(\overline{E},\overline{F})\cong \mathbb{C}$. Then Lemma \ref{hom} says
	\begin{equation*}
		\mathbb{C}\cong\Hom_{\overline{\mathcal{A}}}(\overline{E},\overline{F}) \cong \bigoplus\limits_{i=0}^{n-1} \Hom_{\mathcal{A}}(E,F\otimes\omega_X^i) 
	\end{equation*}
	and so by \cite[Lemma 4.3]{fr} we have 
	\begin{equation*}
		E\cong F\otimes\omega_X^j
	\end{equation*}
	for exactly one $j$ with $0\leqslant j\leqslant n-1$. So $\varphi$ is an unramified morphism of degree $n$. Moreover the computations also show that $\varphi$ is flat by \cite[Lemma, p.675]{schaps}, hence $\varphi$ is \'{e}tale.
\end{proof}

As the symplectic form vanishes on $\Fix(\iota^{*})$ one may ask if $\Fix(\iota^{*})$ is a Lagrangian subscheme in $\M_{\overline{\mathcal{A}}/\overline{X},\overline{c_1},\overline{c_2}}$. This question can be answered by a simple dimension computation.

\begin{lem}
	Let $(X,\mathcal{A})$ be noncommutative bielliptic surface with injective Brauer map.
	The subscheme $\Fix(\iota^{*})$ of $\M_{\overline{\mathcal{A}}/\overline{X},\overline{c_1},\overline{c_2}}$ is Lagrangian if and only if the bielliptic surface $X$ is of type $1$ or $2$ or $\dim(\M_{\mathcal{A}/X,c_1,c_2})=1$.
\end{lem}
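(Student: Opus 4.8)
The plan is to reduce the Lagrangian property to a single numerical equality and then solve it explicitly. Since $\M_{\overline{\mathcal{A}}/\overline{X},\overline{c_1},\overline{c_2}}$ is smooth and symplectic, and the preceding theorem already shows that the symplectic form restricts to zero on $\Fix(\iota^{*})$, the subscheme $\Fix(\iota^{*})$ is isotropic, so it is Lagrangian precisely when its dimension equals half the dimension of the ambient moduli space. By Theorem \ref{moduli} the map $\pi^{*}$ exhibits $\M_{\mathcal{A}/X,c_1,c_2}$ as a finite \'{e}tale cover of $\Fix(\iota^{*})$, hence $\dim \Fix(\iota^{*})=\dim \M_{\mathcal{A}/X,c_1,c_2}$. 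Writing $d=\dim \M_{\mathcal{A}/X,c_1,c_2}$ and $D=\dim \M_{\overline{\mathcal{A}}/\overline{X},\overline{c_1},\overline{c_2}}$, the task becomes showing that $2d=D$ holds if and only if $n=2$ (types $1$ and $2$) or $d=1$, where $n$ denotes the degree of the canonical cover $\pi$, that is the order of $\omega_X$.

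The heart of the argument is a formula relating $D$ and $d$. Both moduli spaces are smooth, so at points $[E]$ and $[\overline{E}]$ the dimensions equal $\ext^1_{\mathcal{A}}(E,E)$ and $\ext^1_{\overline{\mathcal{A}}}(\overline{E},\overline{E})$ respectively. I would compute the two second-order terms via Proposition \ref{serre}: on $X$ the computation in the preceding theorem gives $\Ext^2_{\mathcal{A}}(E,E)\cong \Hom_{\mathcal{A}}(E,E\otimes\omega_X)^{\vee}=0$ by Corollary \ref{homvan}, whereas on the abelian surface $\overline{X}$ we have $\omega_{\overline{X}}\cong\mathcal{O}_{\overline{X}}$, so $\Ext^2_{\overline{\mathcal{A}}}(\overline{E},\overline{E})\cong \Hom_{\overline{\mathcal{A}}}(\overline{E},\overline{E})^{\vee}\cong\mathbb{C}$. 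Since both $E$ and $\overline{E}$ are simple, $\Hom=\mathbb{C}$ in both cases, and the Euler characteristic relations give $d=1-\chi_{\mathcal{A}}(E,E)$ and $D=2-\chi_{\overline{\mathcal{A}}}(\overline{E},\overline{E})$.

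Next I would relate the two Euler characteristics. The derived version of Lemma \ref{hom}, obtained from adjunction for the finite flat cover $\pi$ together with $\pi_{*}\mathcal{O}_{\overline{X}}\cong\bigoplus_{i=0}^{n-1}\omega_X^i$, yields $\chi_{\overline{\mathcal{A}}}(\overline{E},\overline{E})=\sum_{i=0}^{n-1}\chi_{\mathcal{A}}(E,E\otimes\omega_X^i)$. Because $\omega_X$ is a torsion, hence numerically trivial, line bundle, each summand equals $\chi_{\mathcal{A}}(E,E)$ by Riemann--Roch, so $\chi_{\overline{\mathcal{A}}}(\overline{E},\overline{E})=n\,\chi_{\mathcal{A}}(E,E)$. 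Substituting gives $D=2-n(1-d)=n(d-1)+2$, and the Lagrangian condition $2d=D$ becomes $(n-2)(d-1)=0$, so it holds exactly when $n=2$ or $d=1$.

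Finally I would translate $n=2$ into the classification language: from the table, the bielliptic surfaces with nontrivial Brauer group whose canonical bundle has order two are precisely those of type $1$ and type $2$. The main obstacle I anticipate is the identity $\chi_{\overline{\mathcal{A}}}(\overline{E},\overline{E})=n\,\chi_{\mathcal{A}}(E,E)$: one must upgrade Lemma \ref{hom} from $\Hom$ to the full Euler characteristic using the projection formula, and carefully keep the degree $n$ of the canonical cover (the order of $\omega_X$) separate from the degree $\deg(\mathcal{A})\in\{2,3\}$ of the algebra, which plays no role in this dimension comparison.
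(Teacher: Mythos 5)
Your proposal is correct and follows essentially the same route as the paper: reduce the Lagrangian condition to $2\dim\M_{\mathcal{A}/X,c_1,c_2}=\dim\M_{\overline{\mathcal{A}}/\overline{X},\overline{c_1},\overline{c_2}}$ via the finite \'{e}tale cover and isotropy, compute both dimensions through simplicity and Serre duality (with $\Ext^2_{\mathcal{A}}(E,E)=0$ on $X$ and $\Ext^2_{\overline{\mathcal{A}}}(\overline{E},\overline{E})\cong\mathbb{C}$ on $\overline{X}$), and extract $(n-2)\left(\ext^1_{\mathcal{A}}(E,E)-1\right)=0$ from the identity $\chi_{\overline{\mathcal{A}}}(\overline{E},\overline{E})=n\,\chi_{\mathcal{A}}(E,E)$. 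The only cosmetic difference is how that identity is obtained: the paper restricts to a locally projective $E$ (legitimate by part (2) of Theorem \ref{thm2}), pulls back the sheaf $\mathcal{H}om_{\mathcal{A}}(E,E)$ and cites multiplicativity of the Euler characteristic under finite \'{e}tale covers, whereas you push forward using $\pi_{*}\mathcal{O}_{\overline{X}}\cong\bigoplus_{i}\omega_X^{i}$ and numerical triviality of $\omega_X$, which is just the standard proof of that same multiplicativity.
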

\begin{proof}
	By Theorem \ref{moduli} we need to check in which cases we have 
	\begin{equation*}
		\dim\left( \M_{\overline{\mathcal{A}}/\overline{X},\overline{c_1},\overline{c_2}}\right) =2\dim\left( \M_{\mathcal{A}/X,c_1,c_2}\right).
	\end{equation*}
	It is therefore enough to check in which cases we have
	\begin{equation}\label{lagra}
		\ext^1_{\overline{\mathcal{A}}}(\overline{E},\overline{E})=2\ext^1_{\mathcal{A}}(E,E)
	\end{equation}
	for a locally projective $\mathcal{A}$-module $E$ of rank one by part (2) of Theorem \ref{thm2}. 
	
	Since the canonical cover is finite \'{e}tale of degree $n$ we find using \cite[Lemma 1.3]{fr}:
	\begin{equation*}
		\chi(\mathcal{H}om_{\overline{\mathcal{A}}}(\overline{E},\overline{E}))=\chi(\pi^{*}\mathcal{H}om_{\mathcal{A}}(E,E))=n\chi(\mathcal{H}om_{\mathcal{A}}(E,E)).
	\end{equation*}
	Thus we have
	\begin{equation*}
		2-\ext^1_{\overline{\mathcal{A}}}(\overline{E},\overline{E})=n\left( 1-\ext^1_{\mathcal{A}}(E,E)\right). 
	\end{equation*}
	Inserting equation \eqref{lagra} into the last equation and simplifying gives: 
	\begin{equation*}
		(n-2)\left(1-\ext^1_{\mathcal{A}}(E,E) \right)=0.
	\end{equation*}
	We conclude: $\Fix(\iota^{*})$ is a Lagrangian subscheme if and only if the canonical cover of $X$ has degree two, that is $X$ is of type 1 or 2, or in case $\dim(\M_{\mathcal{A}/X,c_1,c_2})=1$.
\end{proof}

\end{document}